\documentclass[a4paper]{amsart}

\usepackage[utf8]{inputenc}
\usepackage[T1]{fontenc}
\usepackage[english]{babel}

\usepackage{amsfonts}
\usepackage{amsmath}
\usepackage{amsrefs}
\usepackage{amssymb}
\usepackage{amstext}
\usepackage{amsthm}

\usepackage{geometry}
\usepackage{fullpage}

\usepackage[shortlabels]{enumitem}
\usepackage{graphicx}
\usepackage{hyperref}
\usepackage{listings}
\usepackage{mathrsfs}
\usepackage{mathtools}
\usepackage{multicol}
\usepackage{shuffle}

\usepackage{tikz}
\usepackage{tikz-cd}
\usetikzlibrary{arrows,backgrounds,chains,decorations.pathreplacing,patterns,shapes}


\newcommand{\area}{\mathsf{area}}
\newcommand{\dinv}{\mathsf{dinv}}
\newcommand{\newdinv}{\mathsf{newdinv}}

\newcommand{\Rise}{\mathsf{Rise}}

\newcommand{\D}{\mathsf{D}} 
\newcommand{\LD}{\mathsf{LD}} 

\newcommand{\PF}{\mathsf{PF}} 
\newcommand{\PLD}{\mathsf{PLD}} 

\newcommand{\RP}{\mathsf{RP}} 


\newcommand{\qbinom}[2]{\genfrac{[}{]}{0pt}{}{#1}{#2}}


\makeatletter

\pgfkeys{
	/tikz/sharp angle/.code={%
		\pgfsetarrowoptions{sharp >}{#1}%
		\pgfsetarrowoptions{sharp <}{-#1}%
	},
	/tikz/sharp > angle/.code={%
		\pgfsetarrowoptions{sharp >}{#1}%
	},
	/tikz/sharp < angle/.code={%
		\pgfsetarrowoptions{sharp <}{#1}%
	},
	/tikz/sharp protrude/.code=\csname if#1\endcsname\qrr@tikz@sharp@z@-0.05\p@\else\qrr@tikz@sharp@z@\z@\fi,
	/tikz/sharp protrude/.default=true
}

\newdimen\qrr@tikz@sharp@z@
\qrr@tikz@sharp@z@\z@
\pgfarrowsdeclare{sharp >}{sharp >}{%
	\edef\pgf@marshal{\noexpand\pgfutil@in@{and}{\pgfgetarrowoptions{sharp >}}}%
	\pgf@marshal
	\ifpgfutil@in@
	\edef\pgf@tempa{\pgfgetarrowoptions{sharp >}}
	\expandafter\qrr@tikz@sharp@parse\pgf@tempa\@qrr@tikz@sharp@parse
	\else
	\qrr@tikz@sharp@parse\pgfgetarrowoptions{sharp >}and-\pgfgetarrowoptions{sharp >}\@qrr@tikz@sharp@parse
	\fi
	\pgfmathparse{max(\pgf@tempa,\pgf@tempb,0)}%
	\let\qrr@tikz@sharp@max\pgfmathresult
	\pgfmathsetlength\pgf@xa{.5*\pgflinewidth * tan(\qrr@tikz@sharp@max)}%
	\pgfarrowsleftextend{+\pgf@xa}%
	\pgfarrowsrightextend{+\pgf@xa}%
}{%
	\edef\pgf@marshal{\noexpand\pgfutil@in@{and}{\pgfgetarrowoptions{sharp >}}}%
	\pgf@marshal
	\ifpgfutil@in@
	\edef\pgf@tempa{\pgfgetarrowoptions{sharp >}}
	\expandafter\qrr@tikz@sharp@parse\pgf@tempa\@qrr@tikz@sharp@parse
	\else
	\qrr@tikz@sharp@parse\pgfgetarrowoptions{sharp >}and-\pgfgetarrowoptions{sharp >}\@qrr@tikz@sharp@parse
	\fi
	\pgfmathsetlength\pgf@ya{.5*\pgflinewidth * tan(max(\pgf@tempa,\pgf@tempb,0))}%
	\pgfmathsetlength\pgf@xa{-.5*\pgflinewidth * tan(\pgf@tempa)}%
	\pgfmathsetlength\pgf@xb{-.5*\pgflinewidth * tan(\pgf@tempb)}%
	\advance\pgf@xa\pgf@ya
	\advance\pgf@xb\pgf@ya
	\ifdim\pgf@xa>\pgf@xb
	\pgftransformyscale{-1}%
	\pgf@xc\pgf@xb
	\pgf@xb\pgf@xa
	\pgf@xa\pgf@xc
	\fi
	\pgfpathmoveto{\pgfqpoint{\qrr@tikz@sharp@z@}{.5\pgflinewidth}}%
	\pgfpathlineto{\pgfqpoint{\pgf@xa}{.5\pgflinewidth}}%
	\pgfpathlineto{\pgfqpoint{\pgf@ya}{+0pt}}%
	\pgfpathlineto{\pgfqpoint{\pgf@xb}{-.5\pgflinewidth}}%
	\pgfpathlineto{\pgfqpoint{\qrr@tikz@sharp@z@}{-.5\pgflinewidth}}%
	\pgfusepathqfill
}
\pgfarrowsdeclare{sharp <}{sharp <}{%
	\edef\pgf@marshal{\noexpand\pgfutil@in@{and}{\pgfgetarrowoptions{sharp <}}}%
	\pgf@marshal
	\ifpgfutil@in@
	\edef\pgf@tempa{\pgfgetarrowoptions{sharp <}}
	\expandafter\qrr@tikz@sharp@parse\pgf@tempa\@qrr@tikz@sharp@parse
	\else
	\expandafter\qrr@tikz@sharp@parse\pgfgetarrowoptions{sharp <}and-\pgfgetarrowoptions{sharp <}\@qrr@tikz@sharp@parse
	\fi
	\pgfmathparse{max(\pgf@tempa,\pgf@tempb,0)}%
	\let\qrr@tikz@sharp@max\pgfmathresult
	\pgfmathsetlength\pgf@xa{.5*\pgflinewidth * tan(\qrr@tikz@sharp@max)}%
	\pgfarrowsleftextend{+\pgf@xa}%
	\pgfarrowsrightextend{+\pgf@xa}%
}{%
	\edef\pgf@marshal{\noexpand\pgfutil@in@{and}{\pgfgetarrowoptions{sharp <}}}%
	\pgf@marshal
	\ifpgfutil@in@
	\edef\pgf@tempa{\pgfgetarrowoptions{sharp <}}
	\expandafter\qrr@tikz@sharp@parse\pgf@tempa\@qrr@tikz@sharp@parse
	\else
	\expandafter\qrr@tikz@sharp@parse\pgfgetarrowoptions{sharp <}and-\pgfgetarrowoptions{sharp <}\@qrr@tikz@sharp@parse
	\fi
	\pgfmathsetlength\pgf@ya{.5*\pgflinewidth * tan(max(\pgf@tempa,\pgf@tempb,0))}%
	\pgfmathsetlength\pgf@xa{-.5*\pgflinewidth * tan(\pgf@tempa)}%
	\pgfmathsetlength\pgf@xb{-.5*\pgflinewidth * tan(\pgf@tempb)}%
	\advance\pgf@xa\pgf@ya
	\advance\pgf@xb\pgf@ya
	\ifdim\pgf@xa>\pgf@xb
	\pgftransformyscale{-1}%
	\pgf@xc\pgf@xb
	\pgf@xb\pgf@xa
	\pgf@xa\pgf@xc
	\fi
	\pgfpathmoveto{\pgfqpoint{\qrr@tikz@sharp@z@}{.5\pgflinewidth}}%
	\pgfpathlineto{\pgfqpoint{\pgf@xa}{.5\pgflinewidth}}%
	\pgfpathlineto{\pgfqpoint{\pgf@ya}{+0pt}}%
	\pgfpathlineto{\pgfqpoint{\pgf@xb}{-.5\pgflinewidth}}%
	\pgfpathlineto{\pgfqpoint{\qrr@tikz@sharp@z@}{-.5\pgflinewidth}}%
	\pgfusepathqfill
}
\def\qrr@tikz@sharp@parse#1and#2\@qrr@tikz@sharp@parse{\def\pgf@tempa{#1}\def\pgf@tempb{#2}}

\makeatother

\newcommand\multiset[2]%
{\mathchoice{\left(\kern-0.4em{\binom{#1}{#2}}\kern-0.4em\right)}
	{\bigl(\kern-0.2em{\binom{#1}{#2}}\kern-0.2em\bigr)}
	{\bigl(\kern-0.2em{\binom{#1}{#2}}\kern-0.2em\bigr)}
	{\bigl(\kern-0.2em{\binom{#1}{#2}}\kern-0.2em\bigr)}}

\let\existstemp\exists \renewcommand*{\exists}{\mathop \existstemp}
\let\foralltemp\forall \renewcommand*{\forall}{\mathop \foralltemp}

\def\quotient#1#2{\raise1ex\hbox{$#1$}\Big/\lower1ex\hbox{$#2$}}

\newcommand{\<}{\langle}
\renewcommand{\>}{\rangle}


\newtheorem{theorem}{Theorem}[section]

\newtheorem{proposition}[theorem]{Proposition}
\newtheorem{corollary}[theorem]{Corollary}
\newtheorem{conjecture}[theorem]{Conjecture}

\theoremstyle{definition}
\newtheorem{definition}[theorem]{Definition}

\theoremstyle{remark}
\newtheorem{remark}[theorem]{Remark}

\title{The new dinv is not so new}

\author{Michele D'Adderio}
\address{Universit\'e Libre de Bruxelles (ULB)\\D\'epartement de Math\'ematique\\ Boulevard du Triomphe, B-1050 Bruxelles\\ Belgium}\email{mdadderi@ulb.ac.be}

\author{Alessandro Iraci}
\address{Universit\'a di Pisa and Universit\'e Libre de Bruxelles (ULB)\\Dipartimento di Matematica\\ Largo Bruno Pontecorvo 5, 56127 Pisa\\ Italia}\email{iraci@student.dm.unipi.it}

\begin{document}
	
\begin{abstract}
	In \cite{Duane-Garsia-Zabrocki-2013} the authors introduced a new dinv statistic, denoted \emph{ndinv}, on the two part case of the shuffle conjecture \cite{HHLRU-2005} in order to prove a compositional refinement. Though in \cite{Hicks-Kim_ndinv} a non-recursive (but algorithmic) definition of ndinv has been given, this statistic still looks a bit unnatural. In this paper we ``unveil the mystery'' around the ndinv, by showing bijectively that the ndinv actually matches the usual dinv statistic in a special case of the generalized Delta conjecture in \cite{Haglund-Remmel-Wilson-2015}. Moreover, we give also a non-compositional proof of the ``$ehh$'' case of the shuffle conjecture (after \cite{Garsia-Xin-Zabrocki-threeshuffle}) by bijectively proving a relation with the two part case of the Delta conjecture.
\end{abstract}

\dedicatory{Dedicated to Adriano Garsia on the occasion of his 90th birthday}
	
\maketitle
\tableofcontents

\section{Introduction}

In \cite{HHLRU-2005} the authors proposed a combinatorial formula for the symmetric function $\Delta_{e_{n-1}}'e_n$, which was known to give the Frobenius characteristic of the so called diagonal harmonics (cf. \cite{Haiman-Vanishing-2002}). This went under the name of \emph{shuffle conjecture} and it kept busy several researchers for quite sometime. After proving some important special cases, like the famous \emph{$q,t$-Catalan} by Garsia and Haglund \cite{Garsia-Haglund-qtCatalan-2002} or the more general \emph{$q,t$-Schr\"{o}der} by Haglund \cite{Haglund-Schroeder-2004}, not much progress had been made for a few years.

In \cite{Haglund-Morse-Zabrocki-2012} Haglund, Morse and Zabrocki formulated a compositional refinement of the shuffle conjecture, which gave a new impulse to the search for a proof of this intriguing conjecture. 

Looking at special cases of the \emph{compositional shuffle conjecture}, in \cite{Duane-Garsia-Zabrocki-2013} Duane, Garsia and Zabrocki introduced a new dinv statistic, which they denoted \emph{ndinv}, to provide a compositional refinement of the two part shuffle conjecture. This statistic looked a bit unnatural, as it was originally defined recursively. Later in \cite{Hicks-Kim_ndinv} Hicks and Kim were able to give a non-recursive (though algorithmic) definition of ndinv: still the notion appears quite artificial.

In this work we ``unveil the mystery'' around this ndinv statistic by establishing a relation between the two part shuffle conjecture and a generalization of the shuffle conjecture due to Haglund, Remmel and Wilson \cite{Haglund-Remmel-Wilson-2015}, known as \emph{(generalized) Delta conjecture}. In fact we show how the ndinv is none other than the natural dinv statistic, but read on the appropriate subset of partially labelled Dyck paths. We do this by combining two bijections in \cite{DAdderio-Iraci-VandenWyngaerd-GenDeltaSchroeder} and \cite{DAdderio-Iraci-polyominoes-2017} that involve parallelogram polyominoes.

In the same line of research, Garsia, Xin and Zabrocki in \cite{Garsia-Xin-Zabrocki-threeshuffle} managed to prove the case $\<\cdot ,e_ah_bh_{n-a-b}\>$ of the compositional shuffle conjecture: this was the most general special case of the shuffle conjecture that had been proved before the breakthrough proof of the full compositional shuffle conjecture by Carlsson and Mellit in \cite{Carlsson-Mellit-ShuffleConj-2015}. 

In the present article we establish a bijective relation between the case $\<\cdot ,e_ah_bh_{n-a-b}\>$ of the shuffle conjecture and the case $\<\cdot ,h_dh_{n-d}\>$ of the Delta conjecture in \cite{Haglund-Remmel-Wilson-2015}. This will allow us to give an alternative proof of the case $\<\cdot ,e_ah_bh_{n-a-b}\>$ of the shuffle conjecture, independent of its compositional refinement. It is worth noticing that, in the current absence of a compositional refinement of the Delta conjecture, our new proof gives at least a hope that it might be possible to prove the same case of the Delta conjecture (so far still open) with the available methods.

\medskip

The rest of the article is organized in the following way. In Section~2 we give the combinatorial definitions that we need in the article. In Section~3 we state the generalized Delta conjecture, while in Section~4 we introduce the missing definitions from symmetric function theory and we prove the identities needed later. In Section~5 we discuss the ndinv and we show how it matches the dinv on certain partially labelled Dyck paths. Finally, in Section~6 we give our non-compositional proof of the ``$ehh$'' case of the shuffle conjecture by establishing a relation with the two part Delta conjecture.

\section{Combinatorial definitions}

In this section we introduce some of the basic combinatorial definitions that we are going to use in the rest of the article.

\subsection{Dyck paths}

\begin{definition}
	A \emph{Dyck path} of size $n$ is a lattice path going from $(0,0)$ to $(n,n)$, using only North and East unit steps and staying weakly above the line $x=y$ called the \emph{main diagonal}. A \emph{labelled Dyck path} is a Dyck path whose vertical steps are labelled with (not necessarily distinct) positive integers such that the labels appearing in each column are strictly increasing from bottom to top. See Figure~\ref{fig:LDP} for an example. A \emph{partially labelled Dyck path} is a labelled Dyck path in which we allow $0$ to be a label, except for the first North step (the bottom-left one).
\end{definition}

The set of all Dyck paths of size $n$ is denoted by $\D(n)$, the set of labelled Dyck paths of size $n$ is denoted by $\LD(n)$, and the set of partially labelled Dyck paths of size $m+n$ with $m$ zero labels is denoted by $\PLD(m,n)$. We identify $\LD(n)$ with $\PLD(0,n)$. For $D \in \PLD(m,n)$ we set $l_i(D)$ to be the label of the $i$-th vertical step.

\begin{figure}[!ht]
	\centering
	
	\begin{tikzpicture}[scale=0.6]
	\draw[gray!60, thin] (0,0) grid (8,8) (0,0) -- (8,8);
	\draw[blue!60, line width = 1.6pt] (0,0)|-(2,3)|-(5,5)|-(6,7)|-(8,8);
	
	\draw
	(0.5,0.5) circle(0.4 cm) node {$2$}
	(0.5,1.5) circle(0.4 cm) node {$4$}
	(0.5,2.5) circle(0.4 cm) node {$5$}
	(2.5,3.5) circle(0.4 cm) node {$1$}
	(2.5,4.5) circle(0.4 cm) node {$3$}
	(5.5,5.5) circle(0.4 cm) node {$2$}
	(5.5,6.5) circle(0.4 cm) node {$6$}
	(6.5,7.5) circle(0.4 cm) node {$1$};
	\end{tikzpicture}
	
	\caption{A labelled Dyck path.}
	\label{fig:LDP}
\end{figure}
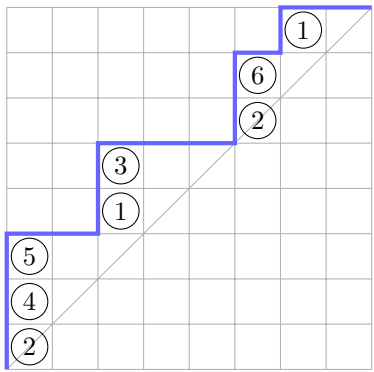

\begin{definition}
	A \emph{parking function} of size $n$ is a function $f \colon [n] \rightarrow [n]$ such that $ \# \{ 1 \leq j \leq n \mid f(j) \geq i \} \leq n+1-i$ (here we used the standard notation $[n] \coloneqq \{1,2,\dots,n\}$).
\end{definition}

We denote by $\PF(n)$ the set of all the parking functions of size $n$. These are in bijective correspondence with the subset of labelled Dyck paths of size $n$ whose labels are exactly the numbers from $1$ to $n$. In particular, a parking function $f$ corresponds to the labelled Dyck path with label $i$ in column $f(i)$ for $1 \leq i \leq n$. We will usually the identify parking functions with the corresponding labelled Dyck paths.

\begin{definition}
	An \emph{area word} is a (finite) string of symbols $a_1 a_2 \cdots a_n$ in a well-ordered alphabet such that if $a_i < a_{i+1}$ then $a_{i+1}$ is the successor of $a_i$ in the alphabet.
\end{definition}

\begin{definition}
	Let $D$ be a (partially labelled) Dyck path of size $n$. We define its \emph{area word} to be the string of integers $a(D) = a_1(D)a_2(D) \cdots a_n(D)$ where $a_i(D)$ is the number of whole squares in the $i$-th row (from the bottom) between the path and the main diagonal.
\end{definition}

Notice that the area word of a Dyck path is an area word in the alphabet $\mathbb{N}$.

\begin{definition}
	We define the statistic \emph{area} on $\D(m+n)$ and $\PLD(m,n)$ as \[ \area(D) \coloneqq \sum_{i=1}^{m+n} a_i(D). \]
\end{definition}

For example, the area word of the path in Figure \ref{fig:LDP} is $01212011$, so its area is $8$.

\begin{definition}\label{def: inversion LD}
	Let $D \in \PLD(m,n)$. For $1 \leq i < j \leq m+n$, we say that the pair $(i,j)$ is a \emph{diagonal inversion} if
	\begin{itemize}
		\item either $a_i(D) = a_j(D)$ and $l_i(D) < l_j(D)$ (\emph{primary inversion}),
		\item or $a_i(D) = a_j(D) + 1$ and $l_i(D) > l_j(D)$ (\emph{secondary inversion}).
	\end{itemize}
	Then we define $d_i \coloneqq \# \{ i < j \leq m+n \mid (i,j) \; \text{is a diagonal inversion} \}$.
\end{definition}

\begin{definition}\ \label{def: dinv LD}
	We define the statistic \emph{dinv} on $\PLD(m,n)$ as \[ \dinv(D) \coloneqq \sum_{i=1}^{m+n} d_i(D), \] and on $\D(m+n)$ by assuming that the inequality condition on the labels always holds.
\end{definition}

The number of primary and secondary diagonal inversions are referred to as \emph{primary} and \emph{secondary dinv} respectively. The labelled Dyck path in Figure~\ref{fig:LDP} has dinv equal to $6$: its diagonal inversions are $(2,7), (4,7)$ (primary), and $(2,6), (3,4), (3,8), (5,8)$ (secondary).

\begin{definition} \label{def: dinv reading word LD}
	Let $D \in \PLD(m,n)$. We define its \emph{dinv reading word} as the sequence of the positive labels read starting from the ones in the main diagonal going bottom to top, left to right; next the ones in the diagonal $y=x+1$ bottom to top, left to right; then the ones in the diagonal $y=x+2$ and so on. Notice that some authors use the reverse convention, getting the reverse of our word.
\end{definition}

For example, the labelled Dyck path in Figure~\ref{fig:LDP} has dinv reading word $22416153$.

To each (partially) labelled Dyck path we associate a monomial in the variables $x_1,x_2,\dots$: for $D \in \PLD(m,n)$ we set \[ x^D \coloneqq \prod_{i=1}^{m+n} x_{l_i(D)} \] where we set $x_0 = 1$ (which explains the word \emph{partially}).

These definitions can be extended to decorated Dyck paths. The idea of decorating rises first appeared in \cite{Haglund-Remmel-Wilson-2015}, together with its modification to the area (cf. also \cite{Zabrocki-4Catalan-2016}).

\begin{definition}
	The \emph{rises} of a Dyck path $D$ of size $n$ are the indices \[ \Rise(D) \coloneqq \{2\leq i \leq n\mid a_{i}(D)>a_{i-1}(D)\},\] or, in words, the vertical steps that are directly preceded by another vertical step.
\end{definition}

\begin{definition}
	A \emph{decorated Dyck path} is a Dyck path where certain rises are decorated with a $\ast$. By $\D(m+n)^{\ast k}$ (resp. $\PLD(m,n)^{\ast k}$) we denote the set of Dyck paths (resp. partially labelled Dyck paths) of size $m+n$ with $k$ decorated rises.
\end{definition}

Decorations on the rises influence the area in the following way.

\begin{definition}
	Let $D$ be a (labelled) decorated Dyck path, and let $a_1(D) \cdots a_{m+n}(D)$ be its area word. Now, let $\mathsf{DRise}(D) \subseteq \Rise(D)$ be the set of indices such that $i \in \mathsf{DRise}(D)$ if the $i$-th vertical step of $D$ is a decorated rise. We define the \emph{area} of $D$ as \[ \area(D) \coloneqq \sum_{i \not \in \mathsf{DRise(D)}} a_{i}(D). \]
	
	For a more visual definition, the area is the number of whole squares that lie between the path and the main diagonal, except for the ones in the rows containing a decorated rise.
	
	The \emph{dinv} of a decorated (partially labelled) Dyck path is defined as the dinv of the path ignoring the decorations.
\end{definition}
In the definition of partially labelled Dyck paths we do not allow a zero in the bottom-left corner; but in Section~\ref{sec:two_bij} we will make use also of objects that allow this. We avoid to introduce a new name/notation for these objects, but we want to define for them the notion of ``zero composition''.
\begin{definition}
	\label{def:zerocomp}
	We define the \emph{zero composition} of a partially labelled Dyck path \emph{with a $0$ label in the bottom-left corner} (cf. Section~\ref{sec:two_bij}) as the composition $\alpha \vDash m$ where $\alpha_i$ is the number of $0$ labels between the $i$-th zero label that lies on the main diagonal (included), and the $(i+1)$-th zero label that lies on the main diagonal (not included) if it exists, or the end if it does not.
\end{definition}

\begin{figure}[!ht]
	\begin{center}
		\begin{tikzpicture}[scale=0.5]
		\draw[step=1.0, gray!60, thin] (0,0) grid (12,12);
		
		\draw[gray!60, thin] (0,0) -- (12,12);
		
		\draw[blue!60, line width=1.6pt] (0,0) -- (0,1) -- (0,2) -- (0,3) -- (1,3) -- (1,4) -- (2,4) -- (2,5) -- (3,5) -- (4,5) -- (5,5) -- (5,6) -- (5,7) -- (5,8) -- (6,8) -- (7,8) -- (8,8) -- (8,9) -- (8,10) -- (9,10) -- (9,11) -- (10,11) -- (11,11) -- (11,12) -- (12,12);
		
		
		
		\draw
		(0.5,0.5) circle (0.4 cm) node {$0$}
		(0.5,1.5) circle (0.4 cm) node {$1$}
		(0.5,2.5) circle (0.4 cm) node {$2$}
		(1.5,3.5) circle (0.4 cm) node {$0$}
		(2.5,4.5) circle (0.4 cm) node {$0$}
		(5.5,5.5) circle (0.4 cm) node {$0$}
		(5.5,6.5) circle (0.4 cm) node {$3$}
		(5.5,7.5) circle (0.4 cm) node {$4$}
		(8.5,8.5) circle (0.4 cm) node {$0$}
		(8.5,9.5) circle (0.4 cm) node {$5$}
		(9.5,10.5) circle (0.4 cm) node {$0$}
		(11.5,11.5) circle (0.4 cm) node {$0$};
		\end{tikzpicture}
	\end{center}
	
	\caption{A partially labelled Dyck path with zero composition $\alpha = (3,1,2,1)$.}
	\label{fig:zerocomp}
\end{figure}
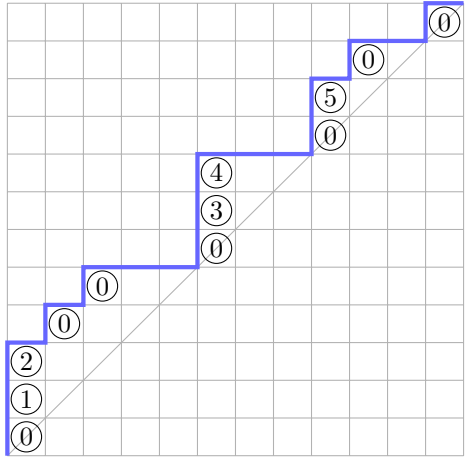

\subsection{Polyominoes}

\begin{definition}
	\label{def:reducedpolyominoes}
	A \emph{reduced (parallelogram) polyomino} of size $m \times n$ is a pair of lattice paths from $(0,0)$ to $(m,n)$ using only north and east steps, such that the first one (the \emph{red path}) always lies weakly above the second one (the \emph{green path}). For an example, see Figure~\ref{fig:plbounce} on the right, or even Figure~\ref{fig:aw-reduced-polyomino}, but ignoring the added notation.
\end{definition}

The set of reduced polyominoes of size $m \times n$ is denoted by $\RP(m,n)$. It is convenient for the purpose of this paper to declare that a reduced polyomino also has a pair of overlapping horizontal steps (one red and one green) from $(-1,0)$ to $(0,0)$. As we will show, this does not alter any of the statistics, but it makes easier to describe certain bijections. We will refer to these steps as \emph{ghost steps}.

Reduced polyominoes are also encoded by their area word, this time in the ordered alphabet $\overline{\mathbb{N}} \coloneqq 0 < \bar{0} < 1 < \bar{1} < 2 < \dots$. The \emph{area word} of a polyomino is computed with a slight modification of the algorithm described in \cite{Aval-DAdderio-Dukes-Hicks-LeBorgne-2014}*{Section~2}. It consists of drawing a diagonal of slope $-1$ in the polyomino from the end of every horizontal green step, and attaching to that step the ``length'' of that diagonal, i.e. the number of unit squares that it crosses. Then, one puts a dot in every square not crossed by any of those diagonals, and attaches to each vertical red step the number of dots in the corresponding row. Next, one bars the numbers attached to vertical red steps, and finally one reads those numbers following the diagonals of slope $-1$, reading the labels when encountering the end of its step and the red label before the green one. See Figure~\ref{fig:aw-reduced-polyomino} for an example. Notice that some diagonals can have length $0$, and that the ghost steps (outside the grid) force the area word to start with a $0$.

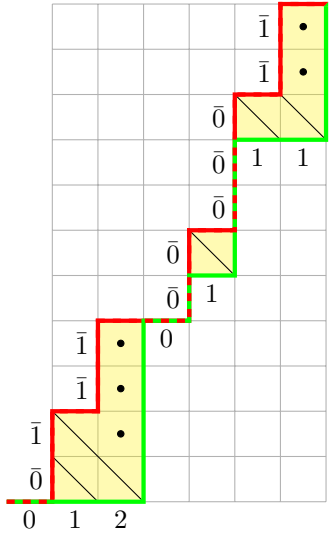
\begin{figure}[!ht]
	\centering
	\begin{tikzpicture}[scale=0.6]
	\draw[gray!60, thin] (0,0) grid (6,11);
	
	\filldraw[yellow, opacity=0.3] (0,0) -- (1,0) -- (2,0) -- (2,1) -- (2,2) -- (2,3) -- (2,4) -- (3,4) -- (3,5) -- (4,5) -- (4,6) -- (4,7) -- (4,8) -- (5,8) -- (6,8) -- (6,9) -- (6,10) -- (6,11) -- (5,11) -- (5,10) -- (5,9) -- (4,9) -- (4,8) -- (4,7) -- (4,6) -- (3,6) -- (3,5) -- (3,4) -- (2,4) -- (1,4) -- (1,3) -- (1,2) -- (0,2) -- (0,1) -- (0,0);
	
	\draw
	(1,0) -- (0,1)
	(2,0) -- (0,2)
	(4,5) -- (3,6)
	(5,8) -- (4,9)
	(6,8) -- (5,9);
	
	\filldraw[fill=black]
	(1.5,1.5) circle (2pt)
	(1.5,2.5) circle (2pt)
	(1.5,3.5) circle (2pt)
	(5.5,9.5) circle (2pt)
	(5.5,10.5) circle (2pt);
	
	\node[below] at (-0.5,0) {$0$};
	\node[below] at (0.5,0) {$1$};
	\node[below] at (1.5,0) {$2$};
	\node[below] at (2.5,4) {$0$};
	\node[below] at (3.5,5) {$1$};
	\node[below] at (4.5,8) {$1$};
	\node[below] at (5.5,8) {$1$};
	
	\node[left] at (0,0.5) {$\bar{0}$};
	\node[left] at (0,1.5) {$\bar{1}$};
	\node[left] at (1,2.5) {$\bar{1}$};
	\node[left] at (1,3.5) {$\bar{1}$};
	\node[left] at (3,4.5) {$\bar{0}$};
	\node[left] at (3,5.5) {$\bar{0}$};
	\node[left] at (4,6.5) {$\bar{0}$};
	\node[left] at (4,7.5) {$\bar{0}$};
	\node[left] at (4,8.5) {$\bar{0}$};
	\node[left] at (5,9.5) {$\bar{1}$};
	\node[left] at (5,10.5) {$\bar{1}$};
	
	\draw[red, sharp <-sharp >, sharp > angle = -45, line width=1.6pt] (-1,0) -- (0,0) -- (0,1) -- (0,2) -- (1,2) -- (1,3) -- (1,4) -- (2,4) -- (3,4) -- (3,5) -- (3,6) -- (4,6) -- (4,7) -- (4,8) -- (4,9) -- (5,9) -- (5,10) -- (5,11) -- (6,11);
	
	\draw[green, sharp <-sharp >, sharp > angle = 45, line width=1.6pt] (-1,0) -- (0,0) -- (1,0) -- (2,0) -- (2,1) -- (2,2) -- (2,3) -- (2,4) -- (3,4) -- (3,5) -- (4,5) -- (4,6) -- (4,7) -- (4,8) -- (5,8) -- (6,8) -- (6,9) -- (6,10) -- (6,11);
	
	\draw[red, dashed, sharp <-sharp >, sharp > angle = -45, line width=1.6pt] (-1,0) -- (0,0) -- (0,1) -- (0,2) -- (1,2) -- (1,3) -- (1,4) -- (2,4) -- (3,4) -- (3,5) -- (3,6) -- (4,6) -- (4,7) -- (4,8) -- (4,9) -- (5,9) -- (5,10) -- (5,11) -- (6,11);
	\end{tikzpicture}
	
	\caption{A $6 \times 11$ reduced polyomino. Its area word is $0 \bar{0} 1 \bar{1} 2 \bar{1} \bar{1} 1 \bar{0} \bar{0} 0 \bar{0} \bar{0} \bar{0} 1 1 \bar{1} \bar{1}$.}
	\label{fig:aw-reduced-polyomino}
\end{figure}

The proof of the following proposition is identical to the proof of \cite{Aval-DAdderio-Dukes-Hicks-LeBorgne-2014}*{Corollary~3.2}, hence it is omitted.

\begin{proposition}
	For $m, n \geq 0$, there is a bijective correspondence between $m \times n$ reduced polyominoes and area words of length $m+n+1$ in the alphabet $\overline{\mathbb{N}}$ starting with $0$, with exactly $m+1$ unbarred letters, and exactly $n$ barred letters.
\end{proposition}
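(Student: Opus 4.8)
The plan is to prove the statement exactly as \cite{Aval-DAdderio-Dukes-Hicks-LeBorgne-2014}*{Corollary~3.2} is proved, the only differences being cosmetic: the two ``colours'' of letters there become here unbarred versus barred letters, and the extra pair of ghost steps is what forces the word to begin with the letter $0$. Concretely, let $\Phi$ be the area word map described above; the goal is to check that $\Phi$ sends an $m\times n$ reduced polyomino to a length-$(m+n+1)$ area word over $\overline{\mathbb N}$ starting with $0$ and having exactly $m+1$ unbarred and $n$ barred letters, to construct a map $\Psi$ in the opposite direction, and to verify that $\Phi$ and $\Psi$ are mutually inverse.

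First I would check that $\Phi$ is well defined and lands in the claimed set. The letter count is immediate: a reduced polyomino has $m$ green horizontal steps together with the ghost one, giving $m+1$ unbarred letters, and $n$ red vertical steps, giving $n$ barred letters, so the word has length $m+n+1$; moreover the ghost horizontal step is the first step met in the anti-diagonal reading order and its diagonal has length $0$, so the word starts with the unbarred letter $0$. The only non-formal point is that the output is genuinely an \emph{area word}, i.e.\ that whenever two consecutive letters satisfy $w_i<w_{i+1}$ one in fact has that $w_{i+1}$ is the successor of $w_i$ in $\overline{\mathbb N}$. This is a local statement about two steps that are consecutive in the reading order, and it is established exactly as in \cite{Aval-DAdderio-Dukes-Hicks-LeBorgne-2014}: passing from one anti-diagonal to the next, the length of a green diagonal (resp.\ the number of dots in a red row) can drop by an arbitrary amount but can grow by at most one unit, precisely because the red path lies weakly above the green path.

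Next I would define the inverse $\Psi$ by reading the word $w_0w_1\cdots w_{m+n}$ from left to right and reconstructing the red and green paths one step at a time, maintaining the current endpoints of the two partial paths (both initialised at the origin, after the ghost step). An unbarred letter is interpreted as a green east step, a barred letter as a red north step, and the value of the letter prescribes respectively the length of the newly created anti-diagonal or the number of dotted cells in the newly created row, while the reading convention (red before green on a common anti-diagonal) dictates in which position each step must be appended. One then checks that the successor condition on the word guarantees, at every stage of the construction, that the partial red path stays weakly above the partial green path, so that the procedure terminates with a bona fide element of $\RP(m,n)$; here the hypotheses that the word starts with $0$, has $m+1$ unbarred letters and has $n$ barred letters are exactly what makes the two paths start and close up correctly. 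Finally, comparing the two constructions step by step (an induction on the length of the word) gives $\Phi\circ\Psi=\mathrm{id}$ and $\Psi\circ\Phi=\mathrm{id}$.

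The step I expect to be the actual content of the argument — everything else being bookkeeping that transcribes directly from \cite{Aval-DAdderio-Dukes-Hicks-LeBorgne-2014} — is the equivalence between the purely combinatorial successor condition on the word and the geometric condition that the red path lies weakly above the green one: namely, proving both that $\Phi$ applied to a reduced polyomino can never violate the successor condition, and conversely that running $\Psi$ on a word which does satisfy it can never push the partial red path below the partial green path. Once this equivalence is in place, the proposition follows.
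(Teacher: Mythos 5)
Your approach matches the paper's exactly: the authors state that the proof is identical to that of \cite{Aval-DAdderio-Dukes-Hicks-LeBorgne-2014}*{Corollary~3.2} and omit it, and you likewise transcribe that argument, correctly identifying the ghost steps as the source of the leading $0$ and the successor condition versus weak-nesting-of-paths equivalence as the only non-formal point. Nothing to add.
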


On reduced polyominoes we have statistics \emph{area} and \emph{dinv} similar to the ones we have on Dyck paths. Let $a_i(P)$ be the $i$-th letter of the area word of a reduced polyomino, (where $a_0 = 0$ is the one associated to the ghost horizontal green step) and let $\vert a_i(P) \vert$ be its value, disregarding the bars.

\begin{definition}
	We define the statistic \emph{area} on $\RP(m,n)$ as \[ \area(P) = \sum_{i=0}^{m+n} \vert a_i(P) \vert, \] which is also the number of whole squares between the two paths.
\end{definition}

\begin{definition}
	Let $P \in \RP(m,n)$. For $0 \leq i < j \leq m+n$, we say that the pair $(i,j)$ is a \textit{diagonal inversion} if $a_i(P)$ is the successor of $a_j(P)$ in the alphabet $\overline{\mathbb{N}}$. Then we define $d_i(P) \coloneqq \# \{ i < j \mid (i,j) \; \text{is a diagonal inversion} \}$. Finally, we define the statistic \emph{dinv} on $\RP(m,n)$ as \[ \dinv(P) = \sum_{i=0}^{m+n} d_i(P). \]
\end{definition}

\begin{definition}
	The \emph{rises} of a reduced polyomino $P$ are the indices \[ \Rise(P) \coloneqq \{1 \leq i \leq n \mid \vert a_{i}(P) \vert > \vert a_{i-1}(P) \vert \},\] which correspond in the figure to the diagonals of slope $-1$ from the endpoint of a vertical red step to the endpoint of a horizontal green step.
\end{definition}

\begin{definition}
	A \emph{decorated reduced polyomino} is a reduced polyomino where certain rises are decorated with a symbol $\ast$.
\end{definition}

By $\RP(m,n)^{\ast k}$ we denote the set of reduced polyominoes of size $m \times n$ with $k$ decorated rises. A decoration on a rise does not affect the dinv of a reduced polyomino, while it affects its area in the same way as it does for Dyck paths, namely in computing the area we have to disregard the letters of the area word whose index is a decorated rise.

\subsection{Two car parking functions}

We need one more combinatorial object, namely two car parking functions. 

\begin{definition}
	A \emph{two car parking function} is a labelled Dyck path such that all the labels have value $1$ or $2$.
\end{definition}

We will refer to the $1$'s as \emph{small cars} and to the $2$'s as \emph{big cars}. Once again, it is convenient to think about them as having $m+1$ big cars instead, one of them in the bottom-left corner (which we call \emph{ghost car} and draw in gray). This doesn't alter any of the statistics.

The set of two car parking functions of size $m+n$ with $n$ $1$'s and $m$ $2$'s, and $k$ decorated rises is denoted by $\PF^2(m,n)^{\ast k}$.
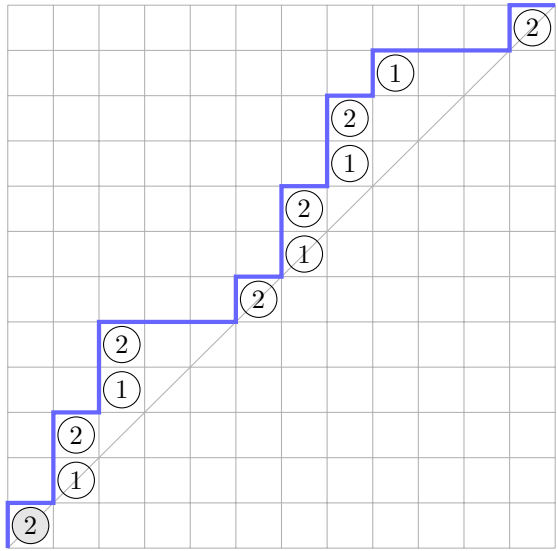
\begin{figure}[!ht]
	\begin{center}
		\begin{tikzpicture}[scale = 0.6]
		\draw[gray!60, thin] (-1,-1) grid (11,11);
		\draw[gray!60, thin] (-1,-1) -- (11,11);
		
		\draw[blue!60, line width=1.6pt] (-1,-1) -- (-1,0) -- (0,0) -- (0,1) -- (0,2) -- (1,2) -- (1,3) -- (1,4) -- (2,4) -- (3,4) -- (4,4) -- (4,5) -- (5,5) -- (5,6) -- (5,7) -- (6,7) -- (6,8) -- (6,9) -- (7,9) -- (7,10) -- (8,10) -- (9,10) -- (10,10) -- (10,11) -- (11,11);
		
		\filldraw[fill=gray!20]
		(-0.5,-0.5) circle (0.4 cm) node {$2$};
		
		\draw
		(0.5,0.5) circle (0.4 cm) node {$1$}
		(0.5,1.5) circle (0.4 cm) node {$2$}
		(1.5,2.5) circle (0.4 cm) node {$1$}
		(1.5,3.5) circle (0.4 cm) node {$2$}
		(4.5,4.5) circle (0.4 cm) node {$2$}
		(5.5,5.5) circle (0.4 cm) node {$1$}
		(5.5,6.5) circle (0.4 cm) node {$2$}
		(6.5,7.5) circle (0.4 cm) node {$1$}
		(6.5,8.5) circle (0.4 cm) node {$2$}
		(7.5,9.5) circle (0.4 cm) node {$1$}
		(10.5,10.5) circle (0.4 cm) node {$2$};
		\end{tikzpicture}
	\end{center}
	
	\caption{A two car parking function with five $1$'s and six $2$'s (plus the ghost car).}
	\label{fig:2cpf}
\end{figure}

The name \emph{two car parking functions} was given because replacing the $1$'s and the $2$'s with two decreasing sequences $n, \dots, 1$ and $m+n, \dots, n+1$ in the dinv reading word, the dinv is not altered. In other words, two car parking functions with $n$ labels equal to $1$ and $m$ labels equal to $2$ behave exactly as parking functions whose dinv reading word is in the shuffle $(n, \dots, 1) \shuffle (m+n, \dots, n+1)$ do, according to the bistatistic $(\dinv, \area)$. To avoid confusion, we will refer to labelled Dyck paths with labels in $\{1,2\}$ as two car parking functions, and to labelled Dyck paths whose reading word is a shuffle of two decreasing sequences as \emph{two shuffle parking functions}.

\begin{definition}
	We define the \emph{big car composition} of $P \in \PF^2(m,n)^{\ast k}$ as the composition $\alpha \vDash m+1$ where $\alpha_i$ is the number of big cars between the $i$-th big car that lies on the main diagonal (included), and the $(i+1)$-th big car that lies on the main diagonal (not included) if it exists, or the end if it does not.
\end{definition}

The two car parking function in Figure~\ref{fig:2cpf} has big car composition $\alpha = (3,3,1)$ (the ghost car is shown in the picture).

\section{The Delta conjecture}

In this short section we limit ourself to recall the definitions needed to state the Delta conjecture, in particular the definition of the Delta operators. For the missing notation we refer to \cite[Section~1]{DAdderio-VandenWyngaerd-2017} or \cite{Haglund-Book-2008} (cf. also Section~\ref{sec:SF}).

We denote by $\Lambda$ the algebra over the field $\mathbb{Q}(q,t)$ of symmetric functions in the variables $x_1,x_2,\dots$. We denote by $e_n$, $h_n$ and $p_n$ the \emph{elementary}, \emph{complete homogeneous} and \emph{power symmetric function} of degree $n$, respectively. We denote by $\omega$ the involution of $\Lambda$ defined by $\omega(e_n) \coloneqq h_n$ for all $n$.

Also, for any partition $\mu$, we denote by $s_{\mu}\in \Lambda$ the corresponding \emph{Schur function}. It is well-known that the symmetric functions $\{s_{\mu}\}_{\mu}$ form a basis of $\Lambda$. The \emph{Hall scalar product} on $\Lambda$, denoted $\<\, ,\, \>$, can be defined by stating that the Schur functions are an orthonormal basis.

Let $\widetilde{H}_{\mu}\in \Lambda$ denote the \emph{(modified) Macdonald polynomial} indexed by the partition $\mu$. As the polynomials $\{\widetilde{H}_{\mu}\}_{\mu}$ form a basis of $\Lambda$, given a symmetric function $f\in \Lambda$, we can define the \emph{Delta operators} $\Delta_f$ and $\Delta_f'$ on $\Lambda$ by setting
\begin{align}
	\Delta_f \widetilde{H}_{\mu} \coloneqq f[B_{\mu}(q,t)] \widetilde{H}_{\mu} \quad \text{ and } \quad 
	\Delta_f' \widetilde{H}_{\mu}  \coloneqq f[B_{\mu}(q,t)-1] \widetilde{H}_{\mu}, \quad \text{ for all } \mu,
\end{align}
where $B_{\mu}(q,t)=\sum_{c\in \mu}q^{a_\mu'(c)}t^{l_\mu'(c)}$, $a_\mu'(c)$ and $l_\mu'(c)$ are the coarm and coleg of $c$ in $\mu$, respectively, and the square brackets denote the \emph{plethystic} substitution.


The first formulation of the generalized Delta conjecture appears in \cite{Haglund-Remmel-Wilson-2015}, which we state in terms of labelled decorated Dyck paths.

\begin{conjecture}[Generalized Delta conjecture] For any integers $n > k \geq 0$, 
	\begin{align}
		\Delta_{h_m} \Delta'_{e_{n-k-1}} e_n =\sum_{D\in \PLD(m,n)^{\ast k}} q^{\dinv(D)}t^{\area(D)} x^D .
	\end{align} 
\end{conjecture}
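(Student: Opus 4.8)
The displayed identity is a conjecture; in full generality it is open. Even the specialization $m=0$ --- the Delta conjecture of \cite{Haglund-Remmel-Wilson-2015} --- remains unproved, and the further specialization $k=0,\,m=0$ is the shuffle conjecture, which became a theorem only through the Dyck path algebra of Carlsson and Mellit \cite{Carlsson-Mellit-ShuffleConj-2015}. So a realistic ``proof proposal'' is not a frontal attack but the reduction of whole families of cases to statements that are already available, and that is the route I would take. The plan is to fix a content for the monomial $x^D$, i.e.\ to pair both sides with some $h_\mu$ in the $x$-variables, and then to re-encode the resulting count of partially labelled decorated Dyck paths by lattice objects whose $(\dinv,\area)$ generating function can be computed on its own terms.

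Concretely, I would first rewrite $\langle \Delta_{h_m}\Delta'_{e_{n-k-1}}e_n,\,h_\mu\rangle$ using the plethystic and $\omega$-duality identities collected in Section~\ref{sec:SF}, with the aim of expressing it through products of $e$'s and $h$'s to which some compositional refinement applies. Then, for small $\mu$ --- the two-part content $\mu=(a,b)$ being the decisive case --- I would transport the right-hand side from $\PLD(m,n)^{\ast k}$ to reduced parallelogram polyominoes via the statistic-preserving bijections of \cite{DAdderio-Iraci-VandenWyngaerd-GenDeltaSchroeder} and \cite{DAdderio-Iraci-polyominoes-2017}; on the polyomino side the area word lives in the ordered alphabet $\overline{\mathbb{N}}$, the $k$ decorated rises and the index $m$ coming from $\Delta_{h_m}$ become visible features of the picture, and there is a recursive decomposition one can push on. Finally I would match the two sides by showing that they satisfy the same recursion, feeding in the Haglund--Morse--Zabrocki compositional shuffle identity together with the Carlsson--Mellit theorem \cite{Carlsson-Mellit-ShuffleConj-2015} as the base input, as in the two-part shuffle arguments that the rest of this paper develops.

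The main obstacle is that there is at present no compositional refinement of the Delta conjecture, so the recursion that drives the proof of the shuffle conjecture has no ready-made analogue here; one is forced to hope that such a refinement can be transported through the polyomino bijections and that it stays compatible both with the decorated rises and with the operator $\Delta_{h_m}$. A further difficulty is that the general right-hand side couples three independent ingredients --- the $m$ zero labels, the $k$ decorations, and the arbitrary positive labels --- and no single recursive decomposition is known that removes them one at a time while preserving the pair $(\dinv,\area)$. For these reasons I would expect the plan to go through only for restricted contents, most notably $\langle\,\cdot\,,h_d h_{n-d}\rangle$, which is exactly the case that this paper ties to the ``$ehh$'' case of the shuffle conjecture; the conjecture in full should stay open until a Delta-analogue of the Dyck path algebra of \cite{Carlsson-Mellit-ShuffleConj-2015} is found.
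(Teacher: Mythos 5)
You correctly recognize that the displayed identity is a conjecture that the paper does not prove: it is quoted from \cite{Haglund-Remmel-Wilson-2015} as background, and the paper itself only treats the special contents $\langle\,\cdot\,,h_a h_b\rangle$ and $\langle\,\cdot\,,e_a h_b h_c\rangle$ via the polyomino bijections. Your outline of those bijections, your reliance on \cite{Carlsson-Mellit-ShuffleConj-2015} for the base input, and your identification of the missing compositional refinement of the Delta conjecture as the obstruction all match the paper's own discussion, so there is nothing to add.
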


For $m=0$, this is known as Delta conjecture. For $m=k=0$, the generalized Delta conjecture reduces to the \emph{Shuffle conjecture} in \cite{HHLRU-2005}, recently proved in \cite{Carlsson-Mellit-ShuffleConj-2015}.

\begin{definition}
	Given $\mu \vdash n$, a \emph{$\mu$-shuffle} is a string of numbers from $1$ to $n$ such that the substrings $(1, \dots, \mu_1)$, $(\mu_1+1, \dots, \mu_1+\mu_2)$, $\dots$, $(n-\mu_{\ell(\mu)}+1, \dots, n)$ appear in increasing order. Given $\mu \vdash n-d$ and $\nu \vdash d$, a $\mu,\nu$-shuffle is a string of numbers from $1$ to $n$ such that the substrings $(1, \dots, \mu_1)$,  $\dots$, $(n-\mu_{\ell(\mu)}+1, \dots, n-d)$ appear in increasing order, and the substrings $(n-d+\nu_1, \dots, n-d+1)$, $\dots$, $(n, \dots, n-\nu_{\ell(\nu)}+1)$ appear in decreasing order.
\end{definition}

It is well known (cf. \cite[Chapter~6]{Haglund-Book-2008}) that the generalized Delta conjecture predicts that taking the scalar product of $\Delta'_{e_{n-k-1}}e_n$ with $e_\mu h_\nu$ corresponds to taking the subsets of paths whose dinv reading word is a $\mu,\nu$-shuffle.

In this paper we only deal with the $h_a h_b$ case (i.e. a shuffle of two decreasing sequences) and the $e_a h_b h_c$ case (i.e. a shuffle of an increasing sequence and two decreasing sequences).

\section{Symmetric functions} \label{sec:SF}

In this section we prove the identities of symmetric functions needed in the rest of the article.

\medskip

For all the undefined notations and the unproven identities, we refer to \cite{DAdderio-VandenWyngaerd-2017}*{Section~1}, where definitions, proofs and/or references can be found.

\subsection{Notation}

We denote by $\Lambda=\bigoplus_{n\geq 0}\Lambda^{(n)}$ the graded algebra of symmetric functions with coefficients in $\mathbb{Q}(q,t)$, and by $\<\, , \>$ the \emph{Hall scalar product} on $\Lambda$, which can be defined by saying that the Schur functions form an orthonormal basis.

The standard bases of the symmetric functions that will appear in our
calculations are the monomial $\{m_\lambda\}_{\lambda}$, complete $\{h_{\lambda}\}_{\lambda}$, elementary $\{e_{\lambda}\}_{\lambda}$, power $\{p_{\lambda}\}_{\lambda}$ and Schur $\{s_{\lambda}\}_{\lambda}$ bases.

\medskip 

\emph{We will use implicitly the usual convention that $e_0 = h_0 = 1$ and $e_k = h_k = 0$ for $k < 0$.}

\medskip 

For a partition $\mu\vdash n$, we denote by
\begin{align}
	\widetilde{H}_{\mu} \coloneqq \widetilde{H}_{\mu}[X]=\widetilde{H}_{\mu}[X;q,t]=\sum_{\lambda\vdash n}\widetilde{K}_{\lambda \mu}(q,t)s_{\lambda}
\end{align}
the \emph{(modified) Macdonald polynomials}, where
\begin{align}
	\widetilde{K}_{\lambda \mu} \coloneqq \widetilde{K}_{\lambda \mu}(q,t)=K_{\lambda \mu}(q,1/t)t^{n(\mu)}\quad \text{ with }\quad n(\mu)=\sum_{i\geq 1}\mu_i(i-1)
\end{align}
are the \emph{(modified) Kostka coefficients} (see \cite{Haglund-Book-2008}*{Chapter~2} for more details). 

The set $\{\widetilde{H}_{\mu}[X;q,t]\}_{\mu}$ is a basis of the ring of symmetric functions $\Lambda$. This is a modification of the basis introduced by Macdonald \cite{Macdonald-Book-1995}.

If we identify the partition $\mu$ with its Ferrers diagram, i.e. with the collection of cells $\{(i,j)\mid 1\leq i\leq \mu_i, 1\leq j\leq \ell(\mu)\}$, then for each cell $c\in \mu$ we refer to the \emph{arm}, \emph{leg}, \emph{co-arm} and \emph{co-leg} (denoted respectively as $a_\mu(c), l_\mu(c), a_\mu(c)', l_\mu(c)'$) as the number of cells in $\mu$ that are strictly to the right, above, to the left and below $c$ in $\mu$, respectively.

We set $M \coloneqq (1-q)(1-t)$ and we define for every partition $\mu$
\begin{align}
	B_{\mu} & \coloneqq B_{\mu}(q,t)=\sum_{c\in \mu}q^{a_{\mu}'(c)}t^{l_{\mu}'(c)} \\
	T_{\mu} & \coloneqq T_{\mu}(q,t)=\prod_{c\in \mu}q^{a_{\mu}'(c)}t^{l_{\mu}'(c)} \\
	\Pi_{\mu} & \coloneqq \Pi_{\mu}(q,t)=\prod_{c\in \mu/(1)}(1-q^{a_{\mu}'(c)}t^{l_{\mu}'(c)}) \\
	w_{\mu} & \coloneqq w_{\mu}(q,t)=\prod_{c\in \mu} (q^{a_{\mu}(c)} - t^{l_{\mu}(c) + 1}) (t^{l_{\mu}(c)} - q^{a_{\mu}(c) + 1}).
\end{align}

We will make extensive use of the \emph{plethystic notation} (cf. \cite{Haglund-Book-2008}*{Chapter~1}).

%
%
%

We define the \emph{nabla} operator on $\Lambda$ by
\begin{align}
	\nabla \widetilde{H}_{\mu} \coloneqq T_{\mu} \widetilde{H}_{\mu} \quad \text{ for all } \mu,
\end{align}
and we define the \emph{delta} operators $\Delta_f$ and $\Delta_f'$ on $\Lambda$ by
\begin{align}
	\Delta_f \widetilde{H}_{\mu} \coloneqq f[B_{\mu}(q,t)] \widetilde{H}_{\mu} \quad \text{ and } \quad 
	\Delta_f' \widetilde{H}_{\mu}  \coloneqq f[B_{\mu}(q,t)-1] \widetilde{H}_{\mu}, \quad \text{ for all } \mu.
\end{align}
Observe that on the vector space of symmetric functions homogeneous of degree $n$, denoted by $\Lambda^{(n)}$, the operator $\nabla$ equals $\Delta_{e_n}$. Moreover, for every $1\leq k\leq n$,
\begin{align}
	\label{eq:deltaprime}
	\Delta_{e_k} = \Delta_{e_k}' + \Delta_{e_{k-1}}' \quad \text{ on } \Lambda^{(n)},
\end{align}
and for any $k > n$, $\Delta_{e_k} = \Delta_{e_{k-1}}' = 0$ on $\Lambda^{(n)}$, so that $\Delta_{e_n}=\Delta_{e_{n-1}}'$ on $\Lambda^{(n)}$.

\medskip

For a given $k\geq 1$, we define the \emph{Pieri coefficients} $c_{\mu \nu}^{(k)}$ and $d_{\mu \nu}^{(k)}$ by setting
\begin{align}
	\label{eq:def_cmunu} h_{k}^\perp \widetilde{H}_{\mu}[X] & =\sum_{\nu \subset_k \mu} c_{\mu \nu}^{(k)} \widetilde{H}_{\nu}[X], \\
	\label{eq:def_dmunu} e_{k}\left[\frac{X}{M}\right] \widetilde{H}_{\nu}[X] & = \sum_{\mu \supset_k \nu} d_{\mu \nu}^{(k)} \widetilde{H}_{\mu}[X],
\end{align}
where $h_k^\perp$ is the adjoint operator of the multiplication by $h_k$ with respect to the Hall scalar product, $\nu\subset_k \mu$ means that $\nu$ is contained in $\mu$ (as Ferrers diagrams) and $\mu/\nu$ has $k$ lattice cells, and the symbol $\mu \supset_k \nu$ is analogously defined. The following identity is well-known:
\begin{align}
	\label{eq:rel_cmunu_dmunu} 
	c_{\mu \nu}^{(k)} = \frac{w_{\mu}}{w_{\nu}}d_{\mu \nu}^{(k)}.
\end{align}


\medskip

%

Recall also the standard notation for $q$-analogues: for $n, k\in \mathbb{N}$, we set
\begin{align}
	[0]_q \coloneqq 0, \quad \text{ and } \quad [n]_q \coloneqq \frac{1-q^n}{1-q} = 1+q+q^2+\cdots+q^{n-1} \quad \text{ for } n \geq 1,
\end{align}
\begin{align}
	[0]_q! \coloneqq 1 \quad \text{ and }\quad [n]_q! \coloneqq [n]_q[n-1]_q \cdots [2]_q [1]_q \quad \text{ for } n \geq 1,
\end{align}
and
\begin{align}
	\qbinom{n}{k}_q  \coloneqq \frac{[n]_q!}{[k]_q![n-k]_q!} \quad \text{ for } n \geq k \geq 0, \quad \text{ while } \quad \qbinom{n}{k}_q \coloneqq 0 \quad \text{ for } n < k.
\end{align}

%

\subsection{Some basic identities}

First of all, we record the well-known
\begin{equation} \label{eq:MacHook}
	\<\widetilde{H}_{\mu}, s_{n-r,1^r}\>=e_r[B_\mu-1]\qquad \text{ for all }\mu\vdash n.
\end{equation}

The following identity is also well-known: for any symmetric function $f\in \Lambda^{(n)}$,
\begin{align}
	\label{eq:lem_e_h_Delta}
	\< \Delta_{e_{d}} f, h_n \> = \< f, e_d h_{n-d} \>.
\end{align}

We will use the following form of \emph{Macdonald-Koornwinder reciprocity}: for all partitions $\alpha$ and $\beta$
\begin{align}
	\label{eq:Macdonald_reciprocity}
	\frac{\widetilde{H}_{\alpha}[MB_{\beta}]}{\Pi_{\alpha}} = \frac{\widetilde{H}_{\beta}[MB_{\alpha}]}{\Pi_{\beta}}.
\end{align}
The following identity is also known as \emph{Cauchy identity}:
\begin{align}
	\label{eq:Mac_Cauchy}
	e_n \left[ \frac{XY}{M} \right] = \sum_{\mu \vdash n} \frac{ \widetilde{H}_{\mu} [X] \widetilde{H}_\mu [Y]}{w_\mu} \quad \text{ for all } n.
\end{align}

We need the following well-known proposition.
\begin{proposition} 
	For $n\in \mathbb{N}$ we have
	\begin{align}
		\label{eq:en_expansion}
		e_n[X] = e_n \left[ \frac{XM}{M} \right] = \sum_{\mu \vdash n} \frac{M B_\mu \Pi_{\mu} \widetilde{H}_\mu[X]}{w_\mu}.
	\end{align}
	Moreover, for all $k\in \mathbb{N}$ with $0\leq k\leq n$, we have
	\begin{align}
		\label{eq:e_h_expansion}
		h_k \left[ \frac{X}{M} \right] e_{n-k} \left[ \frac{X}{M} \right] = \sum_{\mu \vdash n} \frac{e_k[B_\mu] \widetilde{H}_\mu[X]}{w_\mu}.
	\end{align}
\end{proposition}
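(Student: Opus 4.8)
The plan is to prove both identities by expanding everything in the modified Macdonald basis $\{\widetilde H_\mu\}_{\mu\vdash n}$ and matching coefficients, using the Cauchy identity \eqref{eq:Mac_Cauchy} as the main engine. For \eqref{eq:en_expansion}, the first equality is trivial since $e_n[XM/M]=e_n[X]$. For the second equality I would start from the Cauchy identity \eqref{eq:Mac_Cauchy} written with $Y$ replaced by $M$, namely $e_n[X]=e_n[XM/M]=\sum_{\mu\vdash n}\widetilde H_\mu[X]\widetilde H_\mu[M]/w_\mu$, and then identify $\widetilde H_\mu[M]$. The key computation is the well-known evaluation $\widetilde H_\mu[M]=M B_\mu\Pi_\mu$ (equivalently $\widetilde H_\mu[(1-q)(1-t)]$); this follows from Macdonald--Koornwinder reciprocity \eqref{eq:Macdonald_reciprocity} with $\beta=(1)$, since $B_{(1)}=1$, $\Pi_{(1)}=1$, $MB_{(1)}=M$, giving $\widetilde H_\mu[M]/\Pi_\mu=\widetilde H_{(1)}[MB_\mu]=MB_\mu$ (using $\widetilde H_{(1)}[X]=X$, so $\widetilde H_{(1)}[MB_\mu]=MB_\mu$). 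Rearranging yields $\widetilde H_\mu[M]=MB_\mu\Pi_\mu$, and substituting into the Cauchy expansion gives \eqref{eq:en_expansion}.

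For \eqref{eq:e_h_expansion}, I would again invoke \eqref{eq:Mac_Cauchy}, but now use the standard factorization of the kernel: the generating function $\sum_n e_n[XY/M]$ equals $\prod_{i,j}(1+x_iy_j)\cdots$ in plethystic form, and splitting $Y$ as a sum isolates the product $h_k[X/M]e_{n-k}[X/M]$ as the coefficient obtained by specializing $Y$ appropriately. Concretely, the cleanest route is to use the bilinearity/coproduct structure: from $e_n[(X+Z)/M\cdot\text{(something)}]$ one extracts that $h_k[X/M]e_{n-k}[X/M]$ is the coefficient of a suitable monomial, but more directly I would use the known identity $e_k[B_\mu]=\langle h_k[X/M]e_{n-k}[X/M], \widetilde H_\mu[X]\rangle_\ast$ under the $\ast$-scalar product for which the $\widetilde H_\mu/w_\mu$ and $\widetilde H_\mu$ are dual — equivalently, apply $h_k^\perp$-type reasoning: since $h_k[X/M]e_{n-k}[X/M]$ is a degree-$n$ symmetric function, write it as $\sum_\mu c_\mu\widetilde H_\mu[X]/w_\mu$ and compute $c_\mu=\langle h_k[X/M]e_{n-k}[X/M],\widetilde H_\mu[X]\rangle$ via the dual Cauchy pairing $\langle e_n[XY/M],-\rangle$, which collapses to evaluating the plethystic generating series at the alphabet $B_\mu$, producing exactly $e_k[B_\mu]$ (the $e_k$ coming from the $h_k[X/M]$ factor dualizing against $e_k$, and the complementary $e_{n-k}$ factor contributing the rest of $\prod(1+\cdots)$ evaluated at $B_\mu$, whose degree-$k$ part in the relevant variable is $e_k[B_\mu]$).

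The main obstacle is bookkeeping the plethystic substitutions correctly in \eqref{eq:e_h_expansion}: one must be careful that $h_k[X/M]e_{n-k}[X/M]$ is genuinely homogeneous of degree $n$ in $X$ (despite the $1/M$), and that the pairing against $\widetilde H_\mu$ via the Cauchy kernel $e_n[XY/M]$ indeed sends $Y\mapsto B_\mu$ in the right way. I would handle this by recalling that $\sum_{k\geq 0}h_k[X/M]z^k=\exp\big(\sum_{r\geq 1}p_r[X/M]z^r/r\big)=\prod_i\frac{1}{(1-zx_i/M)}$ in plethystic shorthand, and similarly for the $e$-series, so that $\sum_k h_k[X/M]e_{n-k}[X/M]$ (the degree-$n$ part of a product of two such series with appropriately paired formal parameters) matches the degree-$n$ part of $e_n[X(1+z)/M]$ expanded in $z$; pairing with $\widetilde H_\mu$ then gives, by \eqref{eq:Mac_Cauchy} and linearity, the coefficient $e_k[B_\mu]/w_\mu$ after reading off the $z^k$ term, since $\sum_k e_k[B_\mu]z^k=\prod_{c\in\mu}(1+z q^{a'}t^{l'})$ is precisely $e_\bullet$ evaluated at the alphabet $B_\mu$. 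Once the generating-function identities are set up this reduces to extracting coefficients, which is routine; I would present only the identification of $\widetilde H_\mu[M]$ and the Cauchy-pairing step in detail and leave the coefficient extraction to the reader.
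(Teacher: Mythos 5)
The paper offers no proof of this proposition (it is stated as ``well-known''), so there is no authorial argument to compare against; I evaluate your proposal on its own.

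Your derivation of \eqref{eq:en_expansion} is correct and is the standard route: substitute $Y=M$ in the Cauchy identity \eqref{eq:Mac_Cauchy}, then identify $\widetilde{H}_\mu[M]=MB_\mu\Pi_\mu$ via Macdonald--Koornwinder reciprocity \eqref{eq:Macdonald_reciprocity} with $\beta=(1)$, using $B_{(1)}=\Pi_{(1)}=1$ and $\widetilde{H}_{(1)}[X]=X$.

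For \eqref{eq:e_h_expansion} your plan is structurally right, but there is a sign error that, as written, breaks the argument, and the key step is asserted rather than established. You invoke $e_n[X(1+z)/M]$, but plethystically $e_n[X(1+z)/M]=\sum_{k} z^{n-k}e_k[X/M]\,e_{n-k}[X/M]$, which produces $e_k e_{n-k}$, not $h_k e_{n-k}$. The $h_k$ requires the alphabet $Y=1-z$:
\begin{align*}
e_n\!\left[\frac{X(1-z)}{M}\right]=\sum_{k=0}^{n} (-z)^{k}\,h_k\!\left[\frac{X}{M}\right]e_{n-k}\!\left[\frac{X}{M}\right]
= \sum_{\mu\vdash n}\frac{\widetilde{H}_\mu[X]\,\widetilde{H}_\mu[1-z]}{w_\mu},
\end{align*}
the second equality being Cauchy. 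Extracting the coefficient of $(-z)^k$ then requires the specialization
\begin{align*}
\widetilde{H}_\mu[1-z]=\prod_{c\in\mu}\bigl(1-z\,q^{a'_\mu(c)}t^{l'_\mu(c)}\bigr)=\sum_{k\ge 0}(-z)^k\, e_k[B_\mu];
\end{align*}
note that the $(1+z)$-variant implicit in your sketch, $\widetilde{H}_\mu[1+z]=\prod_c(1+zq^{a'}t^{l'})$, is already false for $\mu=(2)$, where $\widetilde{H}_{(2)}[1+z]=1+z+z^2+qz\neq(1+z)(1+qz)$. This principal specialization is the real content of \eqref{eq:e_h_expansion}, and your write-up never actually identifies the product $\prod_c(1\pm zq^{a'}t^{l'})$ with a Macdonald evaluation; that is the missing idea. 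The paper does supply a tool to close it, namely \eqref{eq:MacHook}: since $s_\lambda[1-z]$ vanishes unless $\lambda$ is a hook and $s_{(n-r,1^r)}[1-z]=(-z)^r(1-z)$, one gets $\widetilde{H}_\mu[1-z]=(1-z)\sum_r(-z)^r e_r[B_\mu-1]=\prod_{c\in\mu}(1-zq^{a'_\mu(c)}t^{l'_\mu(c)})$. Fixing the sign and spelling out this use of \eqref{eq:MacHook} would turn your sketch into a complete proof.
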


\subsection{A few useful identities}

We recall here \cite{Haglund-Schroeder-2004}*{Theorem~2.6}, i.e. for any $A,F\in \Lambda$ homogeneous
\begin{equation} \label{eq:HaglundThm}
	\sum_{\mu\vdash n}\Pi_\mu F[MB_\mu]d_{\mu\nu}^A=\Pi_\nu\left(\Delta_{A[MX]}F[X]\right)[MB_\nu],
\end{equation}
where $d_{\mu\nu}^A$ is the generalized Pieri coefficient defined by
\begin{equation}
	\sum_{\mu\supset \nu}d_{\mu\nu}^A\widetilde{H}_\mu=A\widetilde{H}_\nu.
\end{equation}

Observe that \eqref{eq:MacHook} implies
\[\< \Delta_{h_{n}}\Delta_{e_{m-k}}'e_{m+1}, h_{m+1} \>=\< \Delta_{h_{n}}e_{m+1}, s_{k+1,1^{m-k}} \>\]

We recall here two theorems from \cite{DAdderio-Iraci-polyominoes-2017}.
\begin{theorem}[\cite{DAdderio-Iraci-polyominoes-2017}*{Theorem~5.1}]
	For $n,m,k\geq 0$ and $m\geq k\geq 0$, we have
	\begin{equation} \label{eq:Deltahh_lem}
		\< \Delta_{e_{m+n-k-1}}'e_{m+n}, h_{m}h_n \>= \< \Delta_{h_{n}}\Delta_{e_{m-k}}'e_{m+1}, h_{m+1} \>.
	\end{equation}
\end{theorem}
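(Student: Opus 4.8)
The identity is a statement about symmetric functions, so the plan is to expand both sides in the modified Macdonald basis $\{\widetilde H_\mu\}$ and match the resulting sums over partitions, the main tools being Macdonald--Koornwinder reciprocity \eqref{eq:Macdonald_reciprocity}, the Cauchy identity \eqref{eq:Mac_Cauchy}, Haglund's Pieri identity \eqref{eq:HaglundThm}, and the dual-Pieri relation \eqref{eq:rel_cmunu_dmunu}. First I would handle the right-hand side: by the observation recorded just above the statement (a consequence of \eqref{eq:MacHook}), $\langle\Delta_{h_n}\Delta'_{e_{m-k}}e_{m+1},h_{m+1}\rangle=\langle\Delta_{h_n}e_{m+1},s_{k+1,1^{m-k}}\rangle$; now expand $e_{m+1}$ by \eqref{eq:en_expansion}, apply $\Delta_{h_n}$, and pair against $s_{k+1,1^{m-k}}$ using \eqref{eq:MacHook} (note $m+1-(m-k)=k+1$ and $\langle\widetilde H_\mu,h_{m+1}\rangle=1$) to obtain
\[
\text{RHS}=\sum_{\mu\vdash m+1}\frac{MB_\mu\,\Pi_\mu\,h_n[B_\mu]\,e_{m-k}[B_\mu-1]}{w_\mu}.
\]

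For the left-hand side, expand $e_{m+n}$ by \eqref{eq:en_expansion}, apply $\Delta'_{e_{m+n-k-1}}$, and rewrite $\langle\,\cdot\,,h_mh_n\rangle=\langle h_n^\perp(\,\cdot\,),h_m\rangle$; the Pieri rule \eqref{eq:def_cmunu} for $h_n^\perp\widetilde H_\lambda$ and the case $r=0$ of \eqref{eq:MacHook} (which gives $\langle\widetilde H_\rho,h_m\rangle=1$) turn this into $\sum_{\rho\vdash m}\sum_{\lambda\supset_n\rho}\frac{MB_\lambda\Pi_\lambda\,e_{m+n-k-1}[B_\lambda-1]}{w_\lambda}c^{(n)}_{\lambda\rho}$. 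Using \eqref{eq:rel_cmunu_dmunu} to replace $c^{(n)}_{\lambda\rho}$ by $\tfrac{w_\lambda}{w_\rho}d^{(n)}_{\lambda\rho}$, the inner sum becomes $\sum_{\lambda\vdash m+n}\Pi_\lambda\,F[MB_\lambda]\,d^{(n)}_{\lambda\rho}$ for the (non-homogeneous) plethystic symmetric function $F[Y]:=e_1[Y]\,e_{m+n-k-1}[Y/M-1]$, chosen precisely so that $F[MB_\lambda]=MB_\lambda\,e_{m+n-k-1}[B_\lambda-1]$ (since $e_1[MB_\lambda]=MB_\lambda$ and $MB_\lambda/M=B_\lambda$). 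Now apply Haglund's theorem \eqref{eq:HaglundThm} termwise with $A=e_n[X/M]$, so that $d^A_{\lambda\rho}=d^{(n)}_{\lambda\rho}$ and $A[MX]=e_n[X]$; the inner sum collapses to $\Pi_\rho(\Delta_{e_n}F)[MB_\rho]$, giving $\text{LHS}=\sum_{\rho\vdash m}\Pi_\rho(\Delta_{e_n}F)[MB_\rho]/w_\rho$.

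To reconcile the two, write $\Delta_{e_n}F=\sum_\nu a_\nu\widetilde H_\nu$: reciprocity \eqref{eq:Macdonald_reciprocity} turns $\widetilde H_\nu[MB_\rho]$ into $\tfrac{\Pi_\nu}{\Pi_\rho}\widetilde H_\rho[MB_\nu]$, and the one-variable specialization $\sum_{\rho\vdash m}\widetilde H_\rho[Z]/w_\rho=e_m[Z/M]$ of \eqref{eq:Mac_Cauchy} (valid because $\widetilde H_\rho[1]=1$) then gives $\text{LHS}=\sum_\nu a_\nu\Pi_\nu\,e_m[B_\nu]$. A short computation of $\Delta_{e_n}F$ — expand $e_{m+n-k-1}[Y/M-1]$ as $\sum_i(-1)^{m+n-k-1-i}e_i[Y/M]$, use the $k=0$ case of \eqref{eq:e_h_expansion} to write $e_i[Y/M]=\sum_{\sigma\vdash i}\widetilde H_\sigma/w_\sigma$, use the $e_1$-multiplication Pieri rule $e_1[Y]\widetilde H_\sigma=M\sum_{\tau\supset_1\sigma}d^{(1)}_{\tau\sigma}\widetilde H_\tau$ (which is \eqref{eq:def_dmunu} at $k=1$, since $e_1[X/M]=e_1[X]/M$), and simplify via \eqref{eq:rel_cmunu_dmunu} and $\sum_{\sigma\subset_1\tau}c^{(1)}_{\tau\sigma}=\langle\widetilde H_\tau,e_1h_{|\tau|-1}\rangle=B_\tau$ — produces $a_\tau=(-1)^{m+n-k-|\tau|}MB_\tau\,e_n[B_\tau]/w_\tau$, so that
\[
\text{LHS}=\sum_{d}(-1)^{m+n-k-d}\sum_{\nu\vdash d}\frac{MB_\nu\Pi_\nu\,e_n[B_\nu]\,e_m[B_\nu]}{w_\nu}=\sum_{d}(-1)^{m+n-k-d}\langle\Delta_{e_n}\Delta_{e_m}e_d,h_d\rangle ,
\]
the sum running over $\max(m,n)\le d\le m+n-k$. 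The main obstacle is the final identification of this alternating sum over $d$ with the single sum over $\mu\vdash m+1$ displayed above for the RHS: I expect this to require one more pass of the same machinery in the opposite direction — peel the factor $e_m[B_\nu]$ using $\langle\widetilde H_\nu,e_mh_{d-m}\rangle=e_m[B_\nu]$ (from \eqref{eq:lem_e_h_Delta}), then use \eqref{eq:HaglundThm} and reciprocity to contract the partition index down to size $m+1$, after which the alternating sum over $d$ telescopes into the $\Delta'_{e_{m-k}}$-eigenvalue $e_{m-k}[B_\mu-1]$ while the surviving $e_n$-factor becomes $h_n[B_\mu]$ (via $h_n[B_\mu]=\sum_{j\ge 0}h_j[B_\mu-1]$ together with \eqref{eq:MacHook}). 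This is exactly where the ``$+1$'' in $m+1$ versus $m$ originates: it is the extra cell introduced by the $e_1$-multiplication inside $F$. The conceptually delicate points are the choice of $F$ and the application of \eqref{eq:HaglundThm} at $A=e_n[X/M]$; the rest is plethystic bookkeeping with \eqref{eq:Macdonald_reciprocity}, \eqref{eq:Mac_Cauchy}, \eqref{eq:rel_cmunu_dmunu}, and the two Pieri rules.
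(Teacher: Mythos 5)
The theorem you are trying to prove is cited in this paper from \cite{DAdderio-Iraci-polyominoes-2017}*{Theorem~5.1}; the present paper does not prove it, so there is no proof here to compare against directly. I will evaluate the attempt on its own merits and against the proof of the nearby Theorem~\ref{thm:new_id}, which uses the same toolbox.

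Your reductions of the two sides are correct as far as they go. The expansion
\[
\text{RHS}=\sum_{\mu\vdash m+1}\frac{MB_\mu\,\Pi_\mu\,h_n[B_\mu]\,e_{m-k}[B_\mu-1]}{w_\mu}
\]
is right, and the manipulation of the LHS via \eqref{eq:en_expansion}, the Pieri rule \eqref{eq:def_cmunu}, the dualization \eqref{eq:rel_cmunu_dmunu}, Haglund's identity \eqref{eq:HaglundThm} with $F[Y]=e_1[Y]e_{m+n-k-1}[Y/M-1]$ and $A=e_n[X/M]$, then reciprocity \eqref{eq:Macdonald_reciprocity} and the one-variable Cauchy specialization, does produce
\[
\text{LHS}=\sum_{d=\max(m,n)}^{m+n-k}(-1)^{m+n-k-d}\,\langle\Delta_{e_n}\Delta_{e_m}e_d,h_d\rangle
\]
(I verified the case $m=n=1$, $k=0$, where both sides give $1+q+t$).

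The genuine gap is the final identification of this alternating sum with the displayed closed form of the RHS. You flag it yourself: the last paragraph is explicitly conjectural ("I expect this to require one more pass\dots"), and no actual computation is offered. The sketched route---peel off $e_m[B_\nu]$, contract the partition index to size $m+1$, telescope the signs into $e_{m-k}[B_\mu-1]$, absorb the $e_n$-factor into $h_n[B_\mu]$---is not obviously sound: the alternating signs over $d$ must interact simultaneously with the passage $e_n[B_\mu]\to h_n[B_\mu]$ and the recovery of $e_{m-k}[B_\mu-1]$, and you give no argument that they cancel in the required way; in particular the telescoping mechanism is asserted, not demonstrated. Until that step is carried out, the proof is incomplete.

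Structurally, the difficulty traces to the choice of the inhomogeneous $F$. Contrast this with the paper's proof of Theorem~\ref{thm:new_id}: there too both sides are expanded, funneled through \eqref{eq:HaglundThm}, reciprocity, Cauchy, and $e_r[B_\lambda]=e_r[B_\lambda-1]+e_{r-1}[B_\lambda-1]$, but the degree is kept fixed throughout, and the extra term is killed by a vanishing lemma ($e_{n+1}[B_\beta]=0$ for $\beta\vdash n$) rather than a telescope. Expanding $e_{m+n-k-1}[Y/M-1]$ into an alternating sum over degrees is what introduces the telescope you cannot close; I would restructure along the lines of Theorem~\ref{thm:new_id}, targeting a fixed-degree auxiliary identity analogous to \eqref{eq:auxiliar} and finishing with a single application of the splitting plus the vanishing of a higher $e_j$.
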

\begin{theorem}[\cite{DAdderio-Iraci-polyominoes-2017}*{Theorem~5.3}] \label{thm:delta_hh}
	For $n,m,k\geq 0$ and $m\geq k\geq 0$, we have
	\begin{equation}
		\sum_{r=1}^{m-k+1} t^{m-k-r+1} \< \Delta_{h_{m-k-r+1}} \Delta_{e_k} e_n \left[ X \dfrac{1 - q^r}{1 - q} \right], e_n \>= \< \Delta_{h_{n}}\Delta_{e_{m-k}}'e_{m+1}, h_{m+1} \>.
	\end{equation}
\end{theorem}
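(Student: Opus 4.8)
The plan is to show that both sides equal $\langle \Delta'_{e_{m+n-k-1}}e_{m+n}, h_m h_n\rangle$. For the right-hand side this is exactly \eqref{eq:Deltahh_lem}, so the real content is to compute the left-hand $r$-sum and match it with $\langle \Delta_{h_n}\Delta'_{e_{m-k}}e_{m+1}, h_{m+1}\rangle$, which I would do by comparing modified Macdonald expansions. Expanding $e_{m+1}$ via \eqref{eq:en_expansion}, applying the commuting operators $\Delta_{h_n}$ and $\Delta'_{e_{m-k}}$, and pairing with $h_{m+1}$ using $\langle\widetilde{H}_\mu,h_{m+1}\rangle=1$ (the $r=0$ case of \eqref{eq:MacHook}), one obtains
\[
\langle \Delta_{h_n}\Delta'_{e_{m-k}}e_{m+1}, h_{m+1}\rangle \;=\; \sum_{\mu\vdash m+1}\frac{MB_\mu\,\Pi_\mu\,h_n[B_\mu]\,e_{m-k}[B_\mu-1]}{w_\mu}.
\]

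For the left-hand side, the crucial observation is that $\frac{1-q^r}{1-q}=B_{(r)}$ for the one-row partition $(r)$, so $X\frac{1-q^r}{1-q}=\frac{X\cdot MB_{(r)}}{M}$ and the Cauchy identity \eqref{eq:Mac_Cauchy} gives $e_n\big[X\frac{1-q^r}{1-q}\big]=\sum_{\mu\vdash n}\frac{\widetilde{H}_\mu[X]\,\widetilde{H}_\mu[MB_{(r)}]}{w_\mu}$. Macdonald--Koornwinder reciprocity \eqref{eq:Macdonald_reciprocity} rewrites $\widetilde{H}_\mu[MB_{(r)}]=\frac{\Pi_\mu}{\Pi_{(r)}}\widetilde{H}_{(r)}[MB_\mu]$, and, together with the well-known one-row evaluation $\widetilde{H}_{(r)}[X]=\big(\prod_{i=1}^{r}(1-q^i)\big)h_r\big[\frac{X}{1-q}\big]$ and $\Pi_{(r)}=\prod_{i=1}^{r-1}(1-q^i)$, this collapses to $\widetilde{H}_\mu[MB_{(r)}]=(1-q^r)\,\Pi_\mu\,h_r[(1-t)B_\mu]$. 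Applying $\Delta_{e_k}$ and $\Delta_{h_{m-k-r+1}}$ termwise and pairing with $e_n$ via $\langle\widetilde{H}_\mu,e_n\rangle=T_\mu$ (the $r=|\mu|-1$ case of \eqref{eq:MacHook}, since $e_{|\mu|-1}[B_\mu-1]=T_\mu$), the left-hand side of the theorem becomes
\[
\sum_{\mu\vdash n}\frac{\Pi_\mu\,e_k[B_\mu]\,T_\mu}{w_\mu}\ \sum_{r=1}^{m-k+1}t^{m-k-r+1}(1-q^r)\,h_r[(1-t)B_\mu]\,h_{m-k-r+1}[B_\mu].
\]

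I would then evaluate the inner $r$-sum in closed form. Its $r=0$ term vanishes, so the sum may be extended to $r\ge 0$; using $(1-q^r)h_r[(1-t)B_\mu]=h_r[(1-t)B_\mu]-h_r[q(1-t)B_\mu]$, $t^{m-k-r+1}h_{m-k-r+1}[B_\mu]=h_{m-k-r+1}[tB_\mu]$, and the additivity $\sum_{r+s=N}h_r[A]h_s[C]=h_N[A+C]$, the inner sum telescopes to
\[
h_{m-k+1}[(1-t)B_\mu+tB_\mu]-h_{m-k+1}[q(1-t)B_\mu+tB_\mu]=h_{m-k+1}[B_\mu]-h_{m-k+1}[(1-M)B_\mu],
\]
since $q(1-t)+t=1-(1-q)(1-t)=1-M$. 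Therefore, inserting $T_\mu\widetilde{H}_\mu=\nabla\widetilde{H}_\mu$ and $\langle\widetilde{H}_\mu,h_n\rangle=1$, the left-hand side of the theorem equals $\big\langle\nabla\,\Delta_{e_k}\big(\Delta_{h_{m-k+1}}-\Delta_{h_{m-k+1}[(1-M)X]}\big)e_n,\ h_n\big\rangle$, and it remains to identify this with the sum over $\mu\vdash m+1$ displayed above.

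The main obstacle is precisely this last identification, which relates sums over partitions of two different sizes, $n$ and $m+1$: one must carry out the reciprocity bookkeeping (equivalently, a suitable instance of \eqref{eq:HaglundThm}) so that $\nabla\,\Delta_{e_k}$ and the remaining $h_n$ on the $n$-side become the factor $h_n[B_\mu]$ on the $(m+1)$-side, while the operator difference $\Delta_{h_{m-k+1}}-\Delta_{h_{m-k+1}[(1-M)X]}$ must reproduce exactly the factor $MB_\mu\cdot e_{m-k}[B_\mu-1]$; note that $h_{m-k+1}[B_\mu]-h_{m-k+1}[(1-M)B_\mu]=\sum_{j\ge 1}h_{m-k+1-j}[(1-M)B_\mu]\,h_j[MB_\mu]$ already carries a manifest $M$-divisibility matching the $MB_\mu$ on the other side. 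Everything else consists of routine symmetric-function manipulations; an alternative route, bypassing the intermediate $r$-sum evaluation, is to run the whole computation through \eqref{eq:HaglundThm} with $A$ built from products of complete homogeneous symmetric functions and the one-row $\widetilde{H}_{(r)}$, then recognize the resulting double sum against the expansion of $\langle \Delta_{h_n}\Delta'_{e_{m-k}}e_{m+1}, h_{m+1}\rangle$ above.
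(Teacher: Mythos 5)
Your computation is correct and well-motivated through the collapse of the $r$-sum. The key moves --- recognizing $\tfrac{1-q^r}{1-q}=B_{(r)}$, invoking the Cauchy identity \eqref{eq:Mac_Cauchy} and reciprocity \eqref{eq:Macdonald_reciprocity}, the one-row evaluation $\widetilde{H}_{(r)}[X]=\prod_{i=1}^{r}(1-q^i)\,h_r\bigl[\tfrac{X}{1-q}\bigr]$ with $\Pi_{(r)}=\prod_{i=1}^{r-1}(1-q^i)$, and telescoping the inner sum using $q(1-t)+t=1-M$ to obtain $h_{m-k+1}[B_\mu]-h_{m-k+1}[(1-M)B_\mu]$ --- are all sound, and I verified the resulting reduction in small cases. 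One minor slip: the proposed rewriting as $\bigl\langle\nabla\Delta_{e_k}\bigl(\Delta_{h_{m-k+1}}-\Delta_{h_{m-k+1}[(1-M)X]}\bigr)e_n,\,h_n\bigr\rangle$ is off by a factor $MB_\mu$ in each summand, because expanding $e_n$ by \eqref{eq:en_expansion} and pairing with $h_n$ produces $\sum_{\mu\vdash n}\frac{MB_\mu\Pi_\mu T_\mu e_k[B_\mu]G[B_\mu]}{w_\mu}$, whereas your reduced sum has no $MB_\mu$.

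The genuine gap is the final identification, which you yourself flag as ``the main obstacle'' and then dismiss as routine. After your collapse, what remains is exactly
\begin{equation*}
\sum_{\beta\vdash n}\frac{\Pi_\beta\,e_k[B_\beta]\,T_\beta}{w_\beta}\Bigl(h_{m-k+1}[B_\beta]-h_{m-k+1}[(1-M)B_\beta]\Bigr)
=\sum_{\lambda\vdash m+1}\frac{M B_\lambda\,\Pi_\lambda\,h_n[B_\lambda]\,e_{m-k}[B_\lambda-1]}{w_\lambda},
\end{equation*}
a cross-degree identity relating sums over partitions of $n$ and of $m+1$; this is precisely the kind of statement for which the machinery of \eqref{eq:HaglundThm}, \eqref{eq:rel_cmunu_dmunu}, \eqref{eq:lem_eBcmunu} and reciprocity exists, and once your collapse is done it \emph{is} the content of the theorem. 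Concretely, comparing with the proof of Theorem~\ref{thm:new_id} shows you would still need an evaluation of the form $h_N[B_\beta]-h_N[(1-M)B_\beta]=M\sum_{\gamma\vdash N}\frac{B_\gamma T_\gamma}{w_\gamma}\widetilde{H}_\gamma[MB_\beta]$ with $N=m-k+1$, or an equivalent application of \eqref{eq:HaglundThm}, neither of which you state or derive. As written, the argument stops exactly where the work lies, so the proposal is incomplete.
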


Finally, we recall \cite{DAdderio-VandenWyngaerd-2017}*{Lemma~5.2}, i.e. for every $n,k\in \mathbb{N}$, with $n> k\geq 1$, and for every $\beta\vdash n$, we have
\begin{equation} \label{eq:lem_eBcmunu}
	e_{n-k-1}[B_{\beta}-1]B_{\beta}=\sum_{\gamma \subset_k \beta} c_{\beta \gamma}^{(k)}B_{\gamma} T_{\gamma} .
\end{equation}

\subsection{A new identity}

We want to prove the following new identity.
\begin{theorem} \label{thm:new_id}
	For $n,m,k\geq 0$ and $m\geq k\geq 0$, we have
	\begin{equation}
		\< \Delta_{h_{n}}\Delta_{e_{m-k}}'e_{m+1}, h_{m+1} \>  =\< \nabla e_{m+n-k}, e_k h_{n-k} h_{m-k} \>.
	\end{equation}
\end{theorem}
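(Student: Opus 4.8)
The plan is to prove the identity by expanding both sides in the modified Macdonald basis and matching coefficients, using the reciprocity and Pieri machinery collected in Section~\ref{sec:SF}. The left-hand side, via \eqref{eq:MacHook}, equals $\<\Delta_{h_n}e_{m+1},s_{k+1,1^{m-k}}\> = \sum_{\mu\vdash m+1} \frac{MB_\mu\Pi_\mu}{w_\mu}\, h_n[B_\mu]\, e_{m-k}[B_\mu-1]\widetilde H_\mu$ paired against $s_{k+1,1^{m-k}}$, after using \eqref{eq:en_expansion} to expand $e_{m+1}$; since $h_n[B_\mu]$ is just the scalar $h_n[B_\mu]$ and $\<\widetilde H_\mu, s_{k+1,1^{m-k}}\>=e_{m-k}[B_\mu-1]$ by \eqref{eq:MacHook}, this collapses to $\sum_{\mu\vdash m+1}\frac{MB_\mu\Pi_\mu}{w_\mu}h_n[B_\mu]\,e_{m-k}[B_\mu-1]^2$. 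I would then rewrite $h_n[B_\mu]$ using $h_n[B_\mu]=\<e_n[XB_\mu/M]\cdot(\text{something})\>$ — more precisely I would recognize $\frac{MB_\mu\Pi_\mu}{w_\mu}$ and $h_n[B_\mu]$ together as pieces of the evaluation $\widetilde H_\alpha[MB_\mu]$ appearing in reciprocity \eqref{eq:Macdonald_reciprocity}.

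**Key steps.** First I would expand $\<\nabla e_{m+n-k}, e_k h_{n-k}h_{m-k}\>$ on the right. Writing $N:=m+n-k$, use \eqref{eq:en_expansion} for $e_N$ and the fact that $\nabla\widetilde H_\mu=T_\mu\widetilde H_\mu$, so the right-hand side is $\sum_{\mu\vdash N}\frac{MB_\mu\Pi_\mu T_\mu}{w_\mu}\<\widetilde H_\mu, e_k h_{n-k}h_{m-k}\>$. The scalar product $\<\widetilde H_\mu, e_k h_{n-k}h_{m-k}\>$ can be attacked by first using \eqref{eq:lem_e_h_Delta}-type manipulations (or directly Cauchy \eqref{eq:Mac_Cauchy} together with \eqref{eq:e_h_expansion}) to turn the $h$'s into plethystic substitutions: $\<\widetilde H_\mu, h_{n-k}h_{m-k}g\> $ relates to evaluating $g$-expansions at $B_\mu$. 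Second, on the left, after the collapse above, I would apply Macdonald--Koornwinder reciprocity in the form $\widetilde H_\alpha[MB_\beta]/\Pi_\alpha=\widetilde H_\beta[MB_\alpha]/\Pi_\beta$ to flip a sum over $\mu\vdash m+1$ into a sum over a partition of the "other" size; the telescoping identity \eqref{eq:lem_eBcmunu} together with \eqref{eq:rel_cmunu_dmunu} should convert the $e_{m-k}[B_\mu-1]$ factors (one of them, say) into a Pieri sum $\sum_{\gamma\subset_{m-k}\mu}c^{(m-k)}_{\mu\gamma}B_\gamma T_\gamma$, which is exactly the shape one needs to glue the two sides.

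**Alternative via known theorems.** A cleaner route, which I would try first, is to avoid reproving everything from scratch and instead chain the results already quoted: \eqref{eq:Deltahh_lem} gives $\<\Delta'_{e_{m+n-k-1}}e_{m+n}, h_m h_n\>=\<\Delta_{h_n}\Delta'_{e_{m-k}}e_{m+1},h_{m+1}\>$, so it suffices to prove $\<\Delta'_{e_{m+n-k-1}}e_{m+n}, h_m h_n\>=\<\nabla e_{m+n-k}, e_k h_{n-k}h_{m-k}\>$. Set $N=m+n$ and $j=m+n-k-1$, so $\Delta'_{e_j}e_N=\Delta'_{e_{N-k-1}}e_N$; now $\Delta'_{e_{N-k-1}}e_N$ is precisely the symmetric function of the Delta conjecture at parameters $(m=0,n=N,k)$, and there are standard identities (Theorem~\ref{thm:delta_hh} and the surrounding lemmas of \cite{DAdderio-Iraci-polyominoes-2017}, plus the four-variable Catalan manipulations of Zabrocki) expressing $\<\Delta'_{e_{N-k-1}}e_N, h_m h_n\>$ in terms of $\nabla$ applied to a smaller $e$. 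Concretely I would use \eqref{eq:lem_e_h_Delta} to write $\<\Delta'_{e_{N-k-1}}e_N, h_m h_n\> = \<\Delta'_{e_{N-k-1}}e_N, h_m h_n\>$ and then split $h_mh_n = \sum (\cdots)$ via the dual Cauchy/Pieri rule to reduce $\Delta'_{e_{N-k-1}}$ on $e_N$ to $\nabla$ on $e_{N-k}=e_{m+n-k}$, picking up the extra $e_k h_{n-k}h_{m-k}$ on the symmetric-function side from the combinatorics of which cells of the partition get removed.

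**Main obstacle.** The hard part will be bookkeeping the plethystic substitutions and the Pieri/telescoping identities so that the factor $e_k h_{n-k}h_{m-k}$ emerges with the correct multiplicities: one must show that the three "extra" pieces ($e_k$, $h_{n-k}$, $h_{m-k}$) are exactly what is produced when one (i) turns $\Delta'_{e_{N-k-1}}$ into $\nabla e_{N-k}$ — this costs an $e_k$-type factor via \eqref{eq:deltaprime} iterated and \eqref{eq:lem_eBcmunu} — and (ii) resolves the pairing against $h_m h_n$ into the two $h$'s of shifted degree $n-k,m-k$ via reciprocity. Matching these coefficients partition-by-partition, and in particular checking that the vanishing conventions $e_k=h_k=0$ for $k<0$ handle the boundary cases $k=0$ and $k=m$ correctly, is where the real work lies; the rest is assembling identities \eqref{eq:MacHook}, \eqref{eq:lem_e_h_Delta}, \eqref{eq:Macdonald_reciprocity}, \eqref{eq:Mac_Cauchy}, \eqref{eq:en_expansion}, \eqref{eq:e_h_expansion}, \eqref{eq:lem_eBcmunu} and \eqref{eq:HaglundThm} in the right order.
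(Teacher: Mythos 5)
Your overall plan---expand both sides in the $\{\widetilde H_\mu\}$ basis and push the expansions toward each other with Pieri coefficients, Haglund's summation theorem \eqref{eq:HaglundThm}, and Macdonald--Koornwinder reciprocity \eqref{eq:Macdonald_reciprocity}---is the same strategy the paper uses. However, as written the proposal is a sketch rather than a proof, and it contains a concrete error at the very first step. When you expand the left-hand side you write
\[
\sum_{\mu\vdash m+1}\frac{MB_\mu\Pi_\mu}{w_\mu}\,h_n[B_\mu]\,e_{m-k}[B_\mu-1]^2,
\]
with a \emph{squared} factor $e_{m-k}[B_\mu-1]^2$. This is a double count: you replaced $\Delta'_{e_{m-k}}$ by the pairing against $s_{k+1,1^{m-k}}$ via \eqref{eq:MacHook}, yet you also kept the $e_{m-k}[B_\mu-1]$ eigenvalue of $\Delta'_{e_{m-k}}$ in the Macdonald expansion. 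The correct expansion has a single factor $e_{m-k}[B_\mu-1]$, and the two viewpoints are alternatives, not multiplicative.

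Beyond that slip, the actual work is missing. The paper's proof, after expanding both sides, reduces the theorem to an auxiliary identity relating a sum over $\lambda\vdash m+n-k+1$ to $T_\beta$ times a sum over $\gamma\vdash m-k+1$, and then proves that auxiliary identity by a further chain using $e_{m-k}[B_\lambda]=e_{m-k}[B_\lambda-1]+e_{m-k-1}[B_\lambda-1]$ together with the vanishing $e_{n+1}[B_\beta]=0$ for $\beta\vdash n$. None of this appears in your sketch; you explicitly defer the matching of coefficients to ``where the real work lies.'' The ``alternative via known theorems'' is also not a shortcut: proving
$\<\Delta'_{e_{m+n-k-1}}e_{m+n},h_mh_n\>=\<\nabla e_{m+n-k},e_kh_{n-k}h_{m-k}\>$
is precisely Corollary~\ref{cor:Deltahh_ehh}, which in the paper is \emph{deduced from} Theorem~\ref{thm:new_id} together with \eqref{eq:Deltahh_lem}; to use it here you would first have to prove it independently, which brings you right back to the original computation. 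So the proposal names the right toolkit but neither establishes the key auxiliary identity nor correctly sets up the left-hand expansion; it is not yet a proof.
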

\begin{proof}
	On one hand
	\begin{align*}
		&  \hspace{-0.5cm}\< \nabla e_{m+n-k}, e_k h_{n-k} h_{m-k} \>=  \\
		\text{(using \eqref{eq:en_expansion})}& =\sum_{\mu\vdash m+n-k} M \frac{\Pi_\mu}{w_\mu} T_\mu B_\mu \<\widetilde{H}_\mu,e_k h_{n-k} h_{m-k}\> \\
		& =\sum_{\mu\vdash m+n-k} M \frac{\Pi_\mu}{w_\mu} T_\mu B_\mu \<h_{m-k}^\perp \widetilde{H}_\mu,e_k h_{n-k} \> \\
		\text{(using \eqref{eq:def_cmunu})}& =\sum_{\mu\vdash m+n-k} M \frac{\Pi_\mu}{w_\mu} T_\mu B_\mu \sum_{\beta\subset_{m-k} \mu}c_{\mu\beta}^{(m-k)}\< \widetilde{H}_\beta,e_k h_{n-k} \> \\
		\text{(using \eqref{eq:lem_e_h_Delta})}& =\sum_{\mu\vdash m+n-k} M \frac{\Pi_\mu}{w_\mu} T_\mu B_\mu \sum_{\beta\subset_{m-k} \mu}c_{\mu\beta}^{(m-k)} e_k[B_\beta]  \\
		\text{(using \eqref{eq:rel_cmunu_dmunu})}& =\sum_{\beta\vdash n} M \frac{1}{w_\beta} e_k[B_\beta]\sum_{\mu\supset_{m-k} \beta}  \Pi_\mu  T_\mu B_\mu d_{\mu\beta}^{(m-k)}   \\
		\text{(using \eqref{eq:HaglundThm})}& =\sum_{\beta\vdash n} M \frac{1}{w_\beta} e_k[B_\beta] \Pi_\beta \left.(\Delta_{e_{m-k}}e_1[X/M]e_{m+n-k}[X/M])\right|_{X=MB_\beta}\\
		\text{(using \eqref{eq:e_h_expansion})}& =\sum_{\beta\vdash n} M \frac{\Pi_\beta}{w_\beta} e_k[B_\beta]  \sum_{\lambda\vdash m+n-k+1}e_{m-k}[B_\lambda]B_\lambda\frac{\widetilde{H}_\lambda[MB_\beta]}{w_\lambda}.
	\end{align*}
	On the other hand
	\begin{align*}
		& \hspace{-0.5cm} \< \Delta_{h_{n}}\Delta_{e_{m-k}}'e_{m+1}, h_{m+1} \>= \\
		\text{(using \eqref{eq:en_expansion})}& =\sum_{\lambda \vdash m+1} h_n[B_\lambda ]e_{m-k}[B_\lambda -1] \frac{M B_\lambda \Pi_\lambda }{w_\lambda }\\
		\text{(using \eqref{eq:lem_eBcmunu})}& =\sum_{\lambda \vdash m+1} h_n[B_\beta ]\sum_{\gamma \subset_k \lambda} c_{\lambda\gamma}^{(k)}B_\gamma T_\gamma\frac{M  \Pi_\lambda }{w_\lambda } \\
		\text{(using \eqref{eq:rel_cmunu_dmunu})}& =\sum_{\gamma \vdash m-k+1} \frac{M B_\gamma T_\gamma}{w_\gamma} \sum_{\lambda \supset_k \gamma} h_n[B_\lambda ] d_{\lambda\gamma}^{(k)}  \Pi_\lambda  \\
		\text{(using \eqref{eq:HaglundThm})}& =\sum_{\gamma \vdash m-k+1} \frac{M B_\gamma T_\gamma}{w_\gamma} \Pi_\gamma\left.(\Delta_{e_k}h_n[X/M])\right|_{X=MB_\gamma}\\
		\text{(using \eqref{eq:e_h_expansion})}& =\sum_{\gamma \vdash m-k+1} \frac{M B_\gamma T_\gamma}{w_\gamma} \Pi_\gamma \sum_{\beta \vdash n} e_k[B_\beta]\frac{T_\beta\widetilde{H}_\beta[MB_\gamma]}{w_\beta}\\
		\text{(using \eqref{eq:Macdonald_reciprocity})}& =\sum_{\beta\vdash n} M\frac{\Pi_\beta}{w_\beta}   e_k[B_\beta] T_\beta \sum_{\gamma \vdash m-k+1} \frac{B_\gamma T_\gamma}{w_\gamma} \widetilde{H}_\gamma[MB_\beta] .
	\end{align*}
	So our identity would follow from
	\begin{equation} \label{eq:auxiliar}
		\sum_{\lambda\vdash m+n-k+1}e_{m-k}[B_\lambda]B_\lambda\frac{\widetilde{H}_\lambda[MB_\beta]}{w_\lambda}=T_\beta \sum_{\gamma \vdash m-k+1} \frac{B_\gamma T_\gamma}{w_\gamma} \widetilde{H}_\gamma[MB_\beta].
	\end{equation}
	But 
	\begin{align*}
		& \hspace{-0.5cm} T_\beta \sum_{\gamma \vdash m-k+1} \frac{B_\gamma T_\gamma}{w_\gamma} \widetilde{H}_\gamma[MB_\beta]=\\
		& = \sum_{\gamma \vdash m-k+1} \frac{B_\gamma T_\gamma}{w_\gamma} T_\beta \widetilde{H}_\gamma[MB_\beta] \\
		\text{($\beta\vdash n$)}& = \sum_{\gamma \vdash m-k+1} \frac{B_\gamma T_\gamma}{w_\gamma} e_n[B_\beta] \widetilde{H}_\gamma[MB_\beta]\\
		\text{(using \eqref{eq:def_dmunu})}& = \sum_{\gamma \vdash m-k+1} \frac{B_\gamma T_\gamma}{w_\gamma} \sum_{\lambda\vdash m+n-k+1} d_{\lambda \gamma}^{(n)} \widetilde{H}_\lambda[MB_\beta] \\
		\text{(using \eqref{eq:rel_cmunu_dmunu})}& = \sum_{\lambda\vdash m+n-k+1} \frac{\widetilde{H}_\lambda[MB_\beta]}{w_\lambda} \sum_{\gamma \vdash m-k+1}  c_{\lambda \gamma}^{(n)}  B_\gamma T_\gamma \\
		\text{(using \eqref{eq:lem_eBcmunu})}& = \sum_{\lambda\vdash m+n-k+1} \frac{\widetilde{H}_\lambda[MB_\beta]}{w_\lambda} e_{m-k}[B_\lambda-1]B_\lambda.
	\end{align*}
	Using $e_{m-k}[B_\lambda]=e_{m-k}[B_\lambda-1]+e_{m-k-1}[B_\lambda-1]$, in order to prove \eqref{eq:auxiliar}, it remains to show that 
	\[\sum_{\lambda\vdash m+n-k+1} \frac{\widetilde{H}_\lambda[MB_\beta]}{w_\lambda} e_{m-k-1}[B_\lambda-1]B_\lambda = 0,\]
	which is clear by following our last computation backward for $e_{m-k-1}[B_\lambda-1]$:
	\begin{align*}
		& \hspace{-0.5cm}\sum_{\lambda\vdash m+n-k+1} \frac{\widetilde{H}_\lambda[MB_\beta]}{w_\lambda} e_{m-k-1}[B_\lambda-1]B_\lambda=\\
		\text{(using \eqref{eq:lem_eBcmunu})}& = \sum_{\lambda\vdash m+n-k+1} \frac{\widetilde{H}_\lambda[MB_\beta]}{w_\lambda} \sum_{\gamma \vdash m-k+1}  c_{\lambda \gamma}^{(n+1)}  B_\gamma T_\gamma \\
		\text{(using \eqref{eq:rel_cmunu_dmunu})}& = \sum_{\gamma \vdash m-k+1} \frac{B_\gamma T_\gamma}{w_\gamma} \sum_{\lambda\vdash m+n-k+1} d_{\lambda \gamma}^{(n+1)} \widetilde{H}_\lambda[MB_\beta] \\
		\text{(using \eqref{eq:def_dmunu})}& = \sum_{\gamma \vdash m-k+1} \frac{B_\gamma T_\gamma}{w_\gamma} e_{n+1}[B_\beta] \widetilde{H}_\gamma[MB_\beta]\\
		& =0
	\end{align*}
	as $e_{n+1}[B_\beta]=0$ (since $\beta \vdash n$). This completes the proof of the theorem.
\end{proof}
\begin{corollary}
	\label{cor:Deltahh_ehh}
	For $n,m,k\geq 0$ and $m\geq k\geq 0$, we have
	\begin{equation}
		\< \Delta_{e_{m+n-k-1}}'e_{m+n}, h_{m}h_n \>= \< \nabla e_{m+n-k}, e_k h_{n-k} h_{m-k} \> .
	\end{equation}
\end{corollary}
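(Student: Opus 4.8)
The plan is to obtain the corollary as an immediate consequence of two identities already available in the excerpt, by transitivity of equality — so essentially no new work is required beyond invoking them in the right order.

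\medskip

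\textbf{Step 1.} Apply Theorem from \cite{DAdderio-Iraci-polyominoes-2017}*{Theorem~5.1}, i.e. equation \eqref{eq:Deltahh_lem}, which under the hypotheses $n,m,k\geq 0$ and $m\geq k\geq 0$ gives
\[
\< \Delta_{e_{m+n-k-1}}'e_{m+n}, h_{m}h_n \>= \< \Delta_{h_{n}}\Delta_{e_{m-k}}'e_{m+1}, h_{m+1} \>.
\]

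\textbf{Step 2.} Apply Theorem~\ref{thm:new_id}, valid under exactly the same hypotheses, to rewrite the right-hand side:
\[
\< \Delta_{h_{n}}\Delta_{e_{m-k}}'e_{m+1}, h_{m+1} \>  =\< \nabla e_{m+n-k}, e_k h_{n-k} h_{m-k} \>.
\]

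\textbf{Step 3.} Chain the two equalities to conclude
\[
\< \Delta_{e_{m+n-k-1}}'e_{m+n}, h_{m}h_n \>= \< \nabla e_{m+n-k}, e_k h_{n-k} h_{m-k} \>.
\]

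\medskip

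There is no genuine obstacle here: all the analytic content — the plethystic manipulations, the uses of Macdonald--Koornwinder reciprocity \eqref{eq:Macdonald_reciprocity}, the Cauchy identity \eqref{eq:Mac_Cauchy}, the Pieri-coefficient relation \eqref{eq:rel_cmunu_dmunu}, and the expansions \eqref{eq:en_expansion}, \eqref{eq:e_h_expansion}, together with the vanishing argument driven by $e_{n+1}[B_\beta]=0$ for $\beta\vdash n$ — has already been carried out in the proof of Theorem~\ref{thm:new_id}, and the conversion between the two-part-shuffle side and the mixed $\Delta_{h_n}\Delta'_{e_{m-k}}$ side is precisely \eqref{eq:Deltahh_lem}. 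The only thing to check is that the hypotheses match across the two cited statements, which they do verbatim ($n,m,k\geq 0$, $m\geq k\geq 0$). Hence the corollary follows formally, and the proof is one line.
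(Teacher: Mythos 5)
Your proof is correct and is exactly the paper's own argument: the paper's proof of Corollary~\ref{cor:Deltahh_ehh} simply says to combine \eqref{eq:Deltahh_lem} with Theorem~\ref{thm:new_id}, which is precisely your Steps~1--3. Nothing more is needed.
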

\begin{proof}
	Just combine \eqref{eq:Deltahh_lem} and Theorem~\ref{thm:new_id}.
\end{proof}
\begin{corollary}
	\label{cor:ehh}
	For $n,m,k\geq 0$ and $m\geq k\geq 0$, we have
	\begin{equation}
		\sum_{r=1}^{m-k+1} t^{m-k-r+1} \< \Delta_{h_{m-k-r+1}} \Delta_{e_k} e_n \left[ X \dfrac{1 - q^r}{1 - q} \right], e_n \>= \< \nabla e_{m+n-k}, e_k h_{n-k} h_{m-k} \> .
	\end{equation}
\end{corollary}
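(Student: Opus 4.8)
The plan is to obtain Corollary~\ref{cor:ehh} by directly chaining together two results already available in the excerpt, exactly in the spirit of how Corollary~\ref{cor:Deltahh_ehh} was obtained (by combining \eqref{eq:Deltahh_lem} with Theorem~\ref{thm:new_id}). The left-hand side of the asserted identity is literally the left-hand side of Theorem~\ref{thm:delta_hh}, which identifies it with $\< \Delta_{h_{n}}\Delta_{e_{m-k}}'e_{m+1}, h_{m+1} \>$. On the other hand, Theorem~\ref{thm:new_id} asserts that this same quantity equals $\< \nabla e_{m+n-k}, e_k h_{n-k} h_{m-k} \>$, which is precisely the right-hand side of the corollary. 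So the proof is one line: apply Theorem~\ref{thm:delta_hh}, then Theorem~\ref{thm:new_id}.

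The only thing to verify is that the hypotheses line up: Theorem~\ref{thm:delta_hh} and Theorem~\ref{thm:new_id} are both stated for $n,m,k \geq 0$ with $m \geq k \geq 0$, which is exactly the hypothesis imposed in Corollary~\ref{cor:ehh}, so the chaining is legitimate with no extra case analysis or degenerate-case discussion. Since the statement is a formal corollary, there is essentially no obstacle to surmount here; all of the genuine work lives in Theorem~\ref{thm:new_id}, whose proof does the plethystic bookkeeping via \eqref{eq:HaglundThm}, \eqref{eq:Macdonald_reciprocity} and the expansions \eqref{eq:en_expansion}–\eqref{eq:e_h_expansion}. One could, in principle, avoid quoting Theorem~\ref{thm:new_id} and instead run the two symmetric-function expansions in parallel — the $\Delta_{e_{m+n-k}}$/$\nabla$-side against the $\Delta_{h_n}\Delta_{e_{m-k}}'$-side — but that would merely reconstruct the proof of Theorem~\ref{thm:new_id} (together with that of Theorem~\ref{thm:delta_hh}), so the economical route is simply to cite both.

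\begin{proof}
Combine Theorem~\ref{thm:delta_hh} and Theorem~\ref{thm:new_id}.
\end{proof}
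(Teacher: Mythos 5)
Your proof is exactly the paper's: cite Theorem~\ref{thm:delta_hh} to rewrite the left-hand side as $\< \Delta_{h_{n}}\Delta_{e_{m-k}}'e_{m+1}, h_{m+1} \>$, then cite Theorem~\ref{thm:new_id} to equate that with $\< \nabla e_{m+n-k}, e_k h_{n-k} h_{m-k} \>$. Nothing is missing, and the hypothesis check you add is a harmless (and correct) bit of extra care.
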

\begin{proof}
	Just combine Theorem~\ref{thm:delta_hh} and Theorem~\ref{thm:new_id}.
\end{proof}

\section{The newdinv}

In this section we show that the newdinv is actually the natural dinv statistic on a specific subset of partially labelled Dyck paths.

\subsection{Definition of the newdinv}

The definition of the newdinv depends on a bijection $\Phi$ defined in \cite{Duane-Garsia-Zabrocki-2013}*{Section~4}. First of all, we consider the two shuffle parking functions as two car parking functions (starting with a $2$), and we think of these paths as sequences of \emph{dominoes} $[\ell_i, a_i]$ where $\ell_i$ is the label in the $i$-th row, and $a_i$ is the $i$-th letter in the area word. Then $\Phi$ acts as follows: consider the subsequence of dominoes between the first one (included; it is necessarily a $[2,0]$) and the next $[2,0]$ (excluded, if any; otherwise consider the whole sequence). If the subsequence consists of the domino $[2,0]$ only, then it is removed and $\Phi$ does not do anything else. Otherwise, we remove the next domino (which must be a $[1,0]$), then we replace all the $[2,a]$ in the subsequence with $[2, a\!-\!1]$, and finally we replace all the pairs of dominoes $[1,a] [2,a\!-\!1]$ with $[1,a\!-\!1] [2,a]$ if there is any. They define the newdinv recursively as $\newdinv(PF) \coloneqq k-1 + \newdinv(\Phi(PF))$, where $k$ is the number of $[2,0]$ dominoes in the parking function.

\subsection{Two bijections involving polyominoes} \label{sec:two_bij}


In \cite{DAdderio-Iraci-polyominoes-2017}*{Theorem~8.4} the authors define a bijection; a slight modification of this function bijectively maps reduced polyominoes of size $m \times n$ to the ``Catalan'' partially labelled Dyck paths, i.e. paths with $m+1$ zero valleys, $n$ positive labels (that are increasing in the dinv reading word order), and $n$ decorated rises (which forces a zero label to be in the bottom-left corner, as in Definition~\ref{def:zerocomp}). Notice that every step with a non-zero label must be a decorated rise.

We describe here the inverse map, which we call $\eta^{-1}$, as it is easier. See Figure~\ref{fig:plbounce} for an example. It works as follows: looking at the rows of the partially labelled Dyck path going bottom to top, we draw a horizontal red step if the row contains a zero valley, and we draw a vertical step otherwise; next we draw a horizontal green step below each horizontal red step such that the number of squares between them is the same as the number of squares in the row containing the zero valley corresponding to the matching horizontal red step; finally, we draw the vertical green steps in the only possible way. The image is the set of $m \times n$ reduced polyominoes (the first two steps are the ghost steps), and the area is trivially preserved, as the lengths of the rows that contribute to the area are the same as the heights of the columns of the polyomino.

\begin{figure}[!ht]
	\begin{center}
		\begin{tikzpicture}[scale=0.5]
		\draw[step=1.0, gray!60, thin] (0,0) grid (12,12);
		
		\draw[gray!60, thin] (0,0) -- (12,12);
		
		\draw[blue!60, line width=1.6pt] (0,0) -- (0,1) -- (0,2) -- (0,3) -- (1,3) -- (1,4) -- (2,4) -- (2,5) -- (3,5) -- (4,5) -- (4,6) -- (4,7) -- (4,8) -- (5,8) -- (6,8) -- (6,9) -- (6,10) -- (7,10) -- (7,11) -- (8,11) -- (8,12) -- (9,12) -- (10,12) -- (11,12) -- (12,12);
		
		
		
		\node at (-0.5,1.5) {$*$};
		\node at (-0.5,2.5) {$*$};
		\node at (3.5,6.5) {$*$};
		\node at (3.5,7.5) {$*$};
		\node at (5.5,9.5) {$*$};
		
		\draw
		(0.5,0.5) circle (0.4 cm) node {$0$}
		(0.5,1.5) circle (0.4 cm) node {$1$}
		(0.5,2.5) circle (0.4 cm) node {$2$}
		(1.5,3.5) circle (0.4 cm) node {$0$}
		(2.5,4.5) circle (0.4 cm) node {$0$}
		(4.5,5.5) circle (0.4 cm) node {$0$}
		(4.5,6.5) circle (0.4 cm) node {$3$}
		(4.5,7.5) circle (0.4 cm) node {$4$}
		(6.5,8.5) circle (0.4 cm) node {$0$}
		(6.5,9.5) circle (0.4 cm) node {$5$}
		(7.5,10.5) circle (0.4 cm) node {$0$}
		(8.5,11.5) circle (0.4 cm) node {$0$};
		\end{tikzpicture}
		\begin{tikzpicture}[scale=0.8]
		\draw[step=1.0, gray!60, thin] (0,0) grid (6,5);
		
		\filldraw[yellow, opacity=0.3] (0,0) -- (1,0) -- (2,0) -- (2,1) -- (3,1) -- (3,2) -- (4,2) -- (5,2) -- (6,2) -- (6,3) -- (6,4) -- (6,5) -- (5,5) --  (4,5) -- (4,4) -- (3,4) -- (3,3) -- (3,2) -- (2,2) -- (1,2) -- (0,2) -- (0,1) -- (0,0);
		
		\draw[red, sharp <-sharp >, sharp > angle = -45, line width=1.6pt] (-1,0) -- (0,0) -- (0,1) -- (0,2) -- (1,2) -- (2,2) -- (3,2) -- (3,3) -- (3,4) -- (4,4) -- (4,5) -- (5,5) -- (6,5);
		
		\draw[green, sharp <-sharp >, sharp > angle = 45, line width=1.6pt] (-1,0) -- (0,0) -- (1,0) -- (2,0) -- (2,1) -- (3,1) -- (3,2) -- (4,2) -- (5,2) -- (6,2) -- (6,3) -- (6,4) -- (6,5);
		
		\draw[red, dashed, sharp <-sharp >, sharp > angle = -45, line width=1.6pt] (-1,0) -- (0,0) -- (0,1) -- (0,2) -- (1,2) -- (2,2) -- (3,2) -- (3,3) -- (3,4) -- (4,4) -- (4,5) -- (5,5) -- (6,5);
		
		\draw[red, sharp <-sharp >, sharp > angle = -45, line width=1.6pt] (2,2) -- (3,2) -- (3,3);
		
		\node[white] at (-2,-0.8) {};
		\end{tikzpicture}
	\end{center}
	
	\caption{A partially labelled Dyck path in our special subset (left), and the corresponding polyomino (right).}
	\label{fig:plbounce}
\end{figure}

In \cite{DAdderio-Iraci-polyominoes-2017}*{Theorem~4.5} the authors define another bijection between $\RP(m,n)^{\ast k}$ and $\PF^2(m,n)^{\ast k}$, preserving both dinv and area. We take a slightly modified version, which we call $\psi$. See Figure~\ref{fig:2cpf-bijection} for an example. It consists of taking the area word of the polyomino, building a Dyck path with the same area word (disregarding bars), putting $1$'s in rows corresponding to barred letters and $2$'s in rows corresponding to unbarred letter, and keeping the decorations on the rises exactly as they were. The image is the set of two car parking functions with $m+1$ $2$'s and $n$ $1$'s that start with a $2$, where the starting $2$ plays the same role as the ghost steps in the polyomino, since adding (or removing) a $2$ in the first row does not alter the statistics in any way. Removing it we get an element in $\PF^2(m,n)^{\ast k}$, but it is more convenient to just keep it.

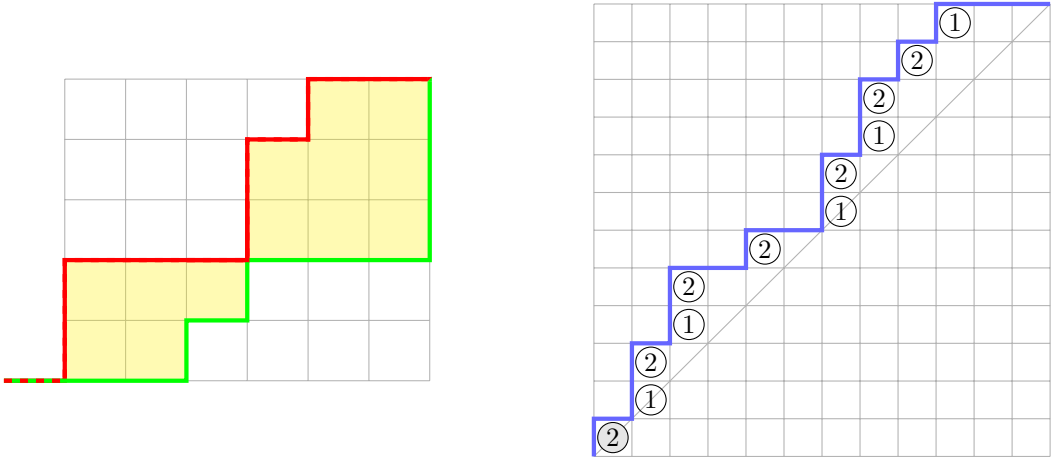
\begin{figure}[!ht]
	\begin{minipage}{0.5\textwidth}
		\begin{center}
			\begin{tikzpicture}[scale = 0.8]
			\draw[step=1.0, gray, opacity=0.6,thin] (0,0) grid (6,5);
			
			\filldraw[yellow, opacity=0.3] (0,0) -- (1,0) -- (2,0) -- (2,1) -- (3,1) -- (3,2) -- (4,2) -- (5,2) -- (6,2) -- (6,3) -- (6,4) -- (6,5) -- (5,5) -- (4,5) -- (4,4) -- (3,4) -- (3,3) -- (3,2) -- (2,2) -- (1,2) -- (0,2) -- (0,1) -- (0,0);
			
			\draw[red, sharp <-sharp >, sharp > angle = -45, line width=1.6pt] (-1,0) -- (0,0) -- (0,1) -- (0,2) -- (1,2) -- (2,2) -- (3,2) -- (3,3) -- (3,4) -- (4,4) -- (4,5) -- (5,5) -- (6,5);
			
			\draw[green, sharp <-sharp >, sharp > angle = 45, line width=1.6pt] (-1,0) -- (0,0) -- (1,0) -- (2,0) -- (2,1) -- (3,1) -- (3,2) -- (4,2) -- (5,2) -- (6,2) -- (6,3) -- (6,4) -- (6,5);
			
			\draw[red, dashed, sharp <-sharp >, sharp > angle = -45, line width=1.6pt] (-1,0) -- (0,0) -- (0,1) -- (0,2) -- (1,2) -- (2,2) -- (3,2) -- (3,3) -- (3,4) -- (4,4) -- (4,5) -- (5,5) -- (6,5);
			
			\draw[red, sharp <-sharp >, sharp > angle = -45, line width=1.6pt] (2,2) -- (3,2) -- (3,3);
			\end{tikzpicture}
		\end{center}
	\end{minipage}%
	\begin{minipage}{0.5\textwidth}
		\begin{center}
			\begin{tikzpicture}[scale = 0.5]
			\draw[gray!60, thin] (-1,-1) grid (11,11);
			\draw[gray!60, thin] (-1,-1) -- (11,11);
			
			\draw[blue!60, line width=1.6pt] (-1,-1) -- (-1,0) -- (0,0) -- (0,1) -- (0,2) -- (1,2) -- (1,3) -- (1,4) -- (2,4) -- (3,4) -- (3,5) -- (4,5) -- (5,5) -- (5,6) -- (5,7) -- (6,7) -- (6,8) -- (6,9) -- (7,9) -- (7,10) -- (8,10) -- (8,11) -- (9,11) -- (10,11) -- (11,11);

			\filldraw[fill=gray!20]
			(-0.5,-0.5) circle (0.4 cm) node {$2$};
			
			\draw
			(0.5,0.5) circle (0.4 cm) node {$1$}
			(0.5,1.5) circle (0.4 cm) node {$2$}
			(1.5,2.5) circle (0.4 cm) node {$1$}
			(1.5,3.5) circle (0.4 cm) node {$2$}
			(3.5,4.5) circle (0.4 cm) node {$2$}
			(5.5,5.5) circle (0.4 cm) node {$1$}
			(5.5,6.5) circle (0.4 cm) node {$2$}
			(6.5,7.5) circle (0.4 cm) node {$1$}
			(6.5,8.5) circle (0.4 cm) node {$2$}
			(7.5,9.5) circle (0.4 cm) node {$2$}
			(8.5,10.5) circle (0.4 cm) node {$1$};
			\end{tikzpicture}
		\end{center}
	\end{minipage}
	\caption{A reduced polyomino (left) with area word $0 \bar{0} 1 \bar{1} 2 1 0 \bar{0} 1 \bar{1} 2 2 \bar{2}$ and its corresponding two car parking function (right).}
	\label{fig:2cpf-bijection}
\end{figure}

We omit a proof of the bijectivity of the maps $\psi$ and $\eta^{-1}$ as they are analogous to the ones in \cite{DAdderio-Iraci-polyominoes-2017}.

Notice that both these maps actually have stronger properties, which are described in \cite{DAdderio-Iraci-polyominoes-2017}.

\subsection{Relation with the newdinv}

It turns out that the composition $\psi \circ \eta^{-1}$ of the bijections described above preserves the area statistic, and sends the natural dinv statistic into the newdinv. 

In fact, it translates the recursion of \cite{Duane-Garsia-Zabrocki-2013}*{Theorem~1.1} in the usual recursion for labelled Dyck paths that occurred for example in \cite{Zabrocki-4Catalan-2016} and \cite{Carlsson-Mellit-ShuffleConj-2015}. Indeed, if we let $\Phi$ to be the map defined in \cite{Duane-Garsia-Zabrocki-2013}*{Section~4}, then the composition $\eta \circ \psi^{-1} \circ \Phi \circ \psi \circ \eta^{-1}$ is essentially the same recursive step used for example in \cite{Zabrocki-4Catalan-2016}*{Proposition~4}. This fact is the core of the content of the proof of the following theorem.

\begin{theorem}
	The composition $\psi \circ \eta^{-1}$ of the two bijections maps the natural dinv statistic on partially labelled Dyck paths to the newdinv statistic on the two shuffle parking functions, and preserves the area.
	
	It also maps the zero composition on the partially labelled Dyck paths to the big car composition on the two shuffle parking functions.
\end{theorem}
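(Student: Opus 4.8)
The plan is to establish all three assertions together by induction on the common size $N=m+n+1$ of the objects in play: the length of the area word of the reduced polyomino, which is also the number of rows of the two car parking function (counting the ghost car) and the size of the partially labelled Dyck path (counting the bottom-left zero). The map $\psi$ preserves this size and $\Phi$ decreases it by one, so this is a sound induction.

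I would first dispose of the claim on $\area$. The map $\psi$ does not change the area word at all — it only reinterprets barred/unbarred letters as $1$'s/$2$'s — so it preserves $\area$; and $\eta^{-1}$ preserves $\area$ because, in our special subset, the rows of the partially labelled Dyck path that count towards $\area$ are exactly the zero-valley rows (every positive label lies on a decorated rise, hence is not counted), and their lengths are precisely the widths of the horizontal strips enclosed between the green and the red paths of the polyomino, which sum to $\area$ of the polyomino. Next, the statement about compositions: in our special subset a row sits on the main diagonal iff its area-word entry is $0$ iff it carries the label $0$ and lies on the diagonal (a positive label sits on a rise and so has area-word entry at least $1$). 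One checks that $\eta^{-1}$ sends such a row to an unbarred $0$ in the area word of the polyomino, and $\psi$ sends that unbarred $0$ to a $2$ lying on the main diagonal of the two car parking function. Since both $\eta^{-1}$ and $\psi$ scan the rows from the bottom up and preserve their relative order, consecutive diagonal $0$'s are carried to consecutive diagonal $2$'s and the number of $0$'s between two consecutive diagonal $0$'s equals the number of $2$'s between the corresponding diagonal $2$'s — i.e. the zero composition is carried to the big car composition. Neither of these two points uses the induction.

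The real work is the identification of $\dinv$ with $\newdinv$. When $N$ is minimal both statistics vanish. For the inductive step, put $P:=\psi\circ\eta^{-1}(D)$ and introduce the conjugated map $\Psi:=\eta\circ\psi^{-1}\circ\Phi\circ\psi\circ\eta^{-1}$, so that by construction $\psi\circ\eta^{-1}(\Psi(D))=\Phi(P)$ and $\Psi$ strictly decreases $N$. The crux — and, as the excerpt already flags, the genuine content — is the claim that, on our special subset, $\Psi$ coincides with the standard size-reducing recursion for partially labelled decorated Dyck paths (the recursive step of \cite{Zabrocki-4Catalan-2016}*{Proposition~4}, also used in \cite{Carlsson-Mellit-ShuffleConj-2015}). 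I would prove this by unwinding the three bijections at the level of domino sequences / area words and matching the two branches of $\Phi$ — ``delete the initial $[2,0]$'' and ``delete the $[1,0]$ following the initial $[2,0]$, lower by one the area-word entries of the intervening $2$'s, and transpose the pairs $[1,a]\,[2,a\!-\!1]$ into $[1,a\!-\!1]\,[2,a]$'' — against the two corresponding branches of that recursion; in particular I must check that $\Psi(D)$ stays in our special subset, with parameters summing to $N-1$, so the inductive hypothesis applies to it.

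Granting the crux, the proof closes by a short comparison of $q$-exponents: the standard recursion changes $\dinv$ by exactly the increment appearing in the recursive definition of $\newdinv$, namely $k-1$ where $k$ is the number of rows of $D$ on the main diagonal, and by the analysis of the compositions above $k$ equals the number of $[2,0]$ dominoes of $P$. Hence, using the inductive hypothesis for the second equality and the definition of $\newdinv$ for the third,
\[
\dinv(D)=(k-1)+\dinv(\Psi(D))=(k-1)+\newdinv(\Phi(P))=\newdinv(P),
\]
which completes the induction. The main obstacle is, unsurprisingly, the crux: carefully checking that conjugating the rather opaque map $\Phi$ of \cite{Duane-Garsia-Zabrocki-2013} by $\psi\circ\eta^{-1}$ reproduces, branch by branch, the familiar Dyck-path recursion, and tracking what happens along the way to the area word, the labels, the decorated rises, and the diagonal structure.
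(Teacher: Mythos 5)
Your overall scaffolding mirrors the paper's: isolate the conjugated map $\Psi=\eta\circ\psi^{-1}\circ\Phi\circ\psi\circ\eta^{-1}$, argue by induction that $\Psi$ drops $\dinv$ by exactly $k-1$ (matching the recursive increment in the definition of $\newdinv$), and handle the $\area$ and composition claims directly. Your treatment of $\area$ (both $\psi$ and $\eta^{-1}$ leave the relevant row/column lengths alone) and of the compositions (diagonal zero valleys $\leftrightarrow$ unbarred $0$'s $\leftrightarrow$ diagonal $2$'s, with order preserved) is correct and coincides with the paper's.

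The gap is at the step you yourself call the crux, and it is not closed by the appeal you make. You assert that, once $\Psi$ is identified with the recursion of \cite{Zabrocki-4Catalan-2016}*{Proposition~4}, "a short comparison of $q$-exponents'' shows the $\dinv$ drop is $k-1$. That drop is not a black-box fact about that recursion which you can cite and specialize: it is precisely the content of the theorem. The paper never routes through the Zabrocki recursion in the proof; instead it unwinds $\psi^{-1}\circ\Phi\circ\psi$ explicitly on polyomino area words (remove the leading $0$ if the word starts $0\,0$; otherwise delete the first $\bar{0}$, decrement the unbarred letters, transpose patterns $a\!-\!1\;\bar{a}\mapsto\overline{a\!-\!1}\;a$, and cycle the prefix to the end), converts that into cutting a vertical green step and cycling a strip of the polyomino, transports the picture through $\eta$ to the Dyck path side, and then does a careful primary/secondary dinv bookkeeping. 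The number $k-1$ falls out only at the very end, as the count of secondary inversions the deleted step formed with the other $k-1$ diagonal zero valleys, which are genuinely lost when the region is cycled. Your proposal replaces this computation with the unverified claim that a standard recursion happens to decrement $\dinv$ by $k-1$ on this special subset; proving that claim would require redoing the same geometric and combinatorial analysis. So the inductive frame is sound, but the load-bearing calculation — that the $\dinv$ drop under $\Psi$ equals $k-1$ — is asserted rather than established.
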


\begin{proof}
	In order to prove that the dinv is mapped to the newdinv, it is enough to show that the map $\eta \circ \psi^{-1} \circ \Phi \circ \psi \circ \eta^{-1}$ decreases the dinv by $k-1$ when applied to a partially labelled Dyck path as above, where $k$ is the number of times that the path touches the main diagonal (not counting the endpoint - notice that $k$ must also be the number of zero valleys on the main diagonal).
	
	Let us first see what $\psi^{-1} \circ \Phi \circ \psi$ does. In terms of area word, it is extremely easy: it takes the area word of the polyomino and it removes the initial $0$ if it starts with two consecutive $0$'s; otherwise, it takes the subsequence of the area word strictly before the second $0$, removes the first $\bar{0}$ (there must be one), decreases all the unbarred letters by one unit, changes all the sequences $a\!-\!1 \; \bar{a}$ to $\overline{a\!-\!1} \; a$, if any, and then moves the modified subsequence to the end of the area word.
	
	Now the goal is to understand what happens to the picture. If the polyomino starts with two consecutive horizontal red steps (the first being the ghost one), then it just deletes one. If it doesn't, however, we claim that what happens is that the first vertical red step is removed, the last vertical green step before the second pair of overlapping horizontal steps is removed (if any; otherwise, the last vertical green step is removed - notice that the first pair of overlapping horizontal steps consists of the first two steps), and then the portion of the two paths between the first pair of overlapping steps (included) and the last one (excluded) is cycled to the end. See Figure~\ref{fig:phi} for an example.
	
	\begin{figure}[!ht]
		\begin{minipage}{0.5\textwidth}
			\begin{center}
				\begin{tikzpicture}[scale = 0.66]
				\draw[gray!60, thin] (0,0) grid (6,11);
				
				\filldraw[yellow, opacity=0.3] (0,0) -- (1,0) -- (2,0) -- (2,1) -- (2,2) -- (2,3) -- (2,4) -- (3,4) -- (3,5) -- (4,5) -- (4,6) -- (4,7) -- (4,8) -- (5,8) -- (6,8) -- (6,9) -- (6,10) -- (6,11) -- (5,11) -- (5,10) -- (5,9) -- (4,9) -- (4,8) -- (4,7) -- (4,6) -- (3,6) -- (3,5) -- (3,4) -- (2,4) -- (1,4) -- (1,3) -- (1,2) -- (0,2) -- (0,1) -- (0,0);
				
				\draw[red, sharp <-sharp >, sharp > angle = -45, line width=1.6pt] (-1,0) -- (0,0) -- (0,1) -- (0,2) -- (1,2) -- (1,3) -- (1,4) -- (2,4) -- (3,4) -- (3,5) -- (3,6) -- (4,6) -- (4,7) -- (4,8) -- (4,9) -- (5,9) -- (5,10) -- (5,11) -- (6,11);
				
				\draw[green, sharp <-sharp >, sharp > angle = 45, line width=1.6pt] (-1,0) -- (0,0) -- (1,0) -- (2,0) -- (2,1) -- (2,2) -- (2,3) -- (2,4) -- (3,4) -- (3,5) -- (4,5) -- (4,6) -- (4,7) -- (4,8) -- (5,8) -- (6,8) -- (6,9) -- (6,10) -- (6,11);
				
				\draw[red, dashed, sharp <-sharp >, sharp > angle = -45, line width=1.6pt] (-1,0) -- (0,0) -- (0,1) -- (0,2) -- (1,2) -- (1,3) -- (1,4) -- (2,4) -- (3,4) -- (3,5) -- (3,6) -- (4,6) -- (4,7) -- (4,8) -- (4,9) -- (5,9) -- (5,10) -- (5,11) -- (6,11);
				
				\draw[orange, line width = 1.6 pt] (-1.1,-0.1) -- (2.1,-0.1) -- (2.1,4.1) -- (-1.1,4.1) -- cycle;
				\end{tikzpicture}
			\end{center}
		\end{minipage}%
		\begin{minipage}{0.5\textwidth}
			\begin{center}
				\begin{tikzpicture}[scale = 0.66]
				\draw[gray!60, thin] (0,0) grid (6,10);
				
				\filldraw[yellow, opacity=0.3] (0,0) -- (0,1) -- (0,2) -- (1,2) -- (1,3) -- (1,4) -- (1,5) -- (2,5) -- (2,6) -- (2,7) -- (3,7) -- (4,7) -- (4,8) -- (5,8) -- (5,9) -- (5,10) -- (6,10) -- (6,9) -- (6,8) -- (6,7) -- (5,7) -- (4,7) -- (3,7) -- (3,6) -- (3,5) -- (3,4) -- (2,4) -- (1,4) -- (1,3) -- (1,2) -- (1,1) -- (0,1) -- (0,0);
				
				\draw[red, sharp <-sharp >, sharp > angle = -45, line width=1.6pt] (-1,0) -- (0,0) -- (0,1) -- (0,2) -- (1,2) -- (1,3) -- (1,4) -- (1,5) -- (2,5) -- (2,6) -- (2,7) -- (3,7) -- (4,7) -- (4,8) -- (5,8) -- (5,9) -- (5,10) -- (6,10);
				
				\draw[green, sharp <-sharp >, sharp > angle = 45, line width=1.6pt] (-1,0) -- (0,0) -- (0,1) -- (1,1) -- (1,2) -- (1,3) -- (1,4) -- (2,4) -- (3,4) -- (3,5) -- (3,6) -- (3,7) -- (4,7) -- (5,7) -- (6,7) -- (6,8) -- (6,9) -- (6,10);
				
				\draw[red, dashed, sharp <-sharp >, sharp > angle = -45, line width=1.6pt] (-1,0) -- (0,0) -- (0,1) -- (0,2) -- (1,2) -- (1,3) -- (1,4) -- (1,5) -- (2,5) -- (2,6) -- (2,7) -- (3,7) -- (4,7) -- (4,8) -- (5,8) -- (5,9) -- (5,10) -- (6,10);
				
				\draw[orange, line width = 1.6 pt] (2.9,6.9) -- (6.1,6.9) -- (6.1,10.1) -- (2.9,10.1) -- cycle;
				\end{tikzpicture}
			\end{center}
		\end{minipage}
		\caption{The reduced polyomino in Figure~\ref{fig:aw-reduced-polyomino} (left) and its image via the map $\psi^{-1} \circ \Phi \circ \psi$ (right). The section before the second pair of overlapping steps (boxed in orange) is shrunk vertically by one unit and moved to the end.}
		\label{fig:phi}
	\end{figure}
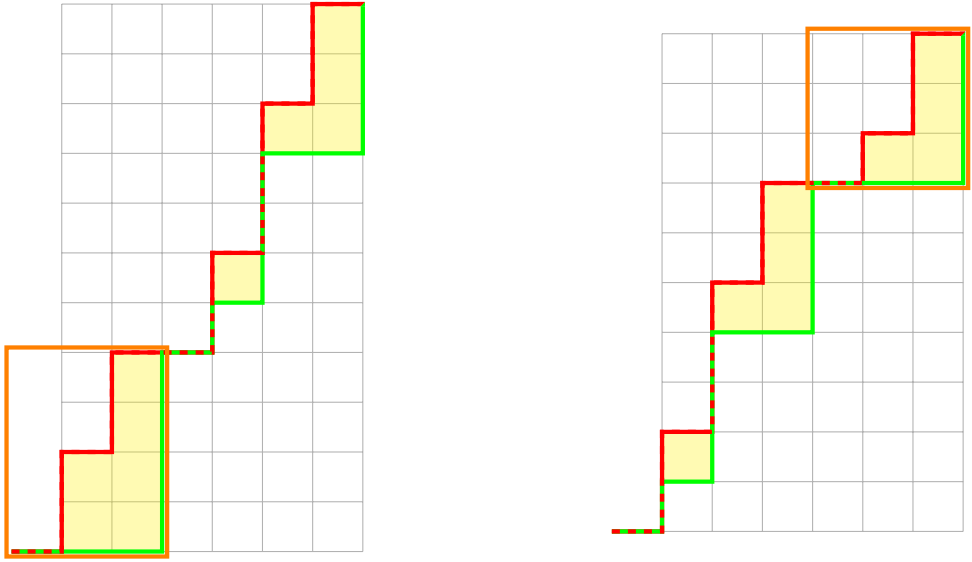
	
	The first thing to notice is that the $0$'s in the area word correspond to the pairs of overlapping horizontal steps. This is clear from the construction of the area word in \cite{Aval-DAdderio-Dukes-Hicks-LeBorgne-2014}*{Section~2} (see also the first algorithm described in \cite{DAdderio-Iraci-polyominoes-2017}*{Subsection~2.1}), and it means that the region involved in this process is the one between the first two pairs of overlapping horizontal steps. Now, the first vertical red step corresponds to the first $\bar{0}$, which is deleted as expected.
	
	Deleting the last vertical green step in the region means that the two paths are vertically pushed one step closer to each other. When doing this, the length of the diagonal of slope $-1$ starting from the end of a horizontal green step (which determine the values of the unbarred letters in the area word) decreases by one if and only if the other end hits the beginning of a horizontal red step, which is equivalent to saying that the matching letter is not part of a rise in the reading word: rises correspond exactly to diagonals of slope $-1$ going from the end of a horizontal green step to the beginning of a vertical red step, as it is clear from a correspondence with Dyck paths shown in \cite{Aval-DAdderio-Dukes-Hicks-LeBorgne-2014}*{Section~3} (see also the second algorithm described in \cite{DAdderio-Iraci-polyominoes-2017}*{Subsection~2.1}). This means that the corresponding unbarred letters decrease by one unit.
	
	For the same reason the value of the barred letters that are part of a rise decreases by one unit (this is clear from the correspondence with Dyck paths in \cite{Aval-DAdderio-Dukes-Hicks-LeBorgne-2014}*{Section~3}, as the vertical red step is sliding below the matching horizontal green step); notice also that the red step, being pushed down, is now read \emph{before} the horizontal green step which was part of the same rise. This means that a rise $a \; \bar{a}$ is changed into $\overline{a\!-\!1} \; a$, which is the same as decreasing the unbarred letter one unit and then change the sequences $a\!-\!1 \; \bar{a}$ we get to $\overline{a\!-\!1} \; a$.  It is immediate that barred letters that were not part of a rise keep their position in the area word instead. Cycling the region to the end simply does the same to the area word.
	
	This proves that $\psi^{-1} \circ \Phi \circ \psi$ acts on reduced polyominoes as we claimed. Now we have to check what happens to partially labelled Dyck paths. If a polyomino starts with two horizontal red steps then the corresponding Dyck path starts with two consecutive zero valleys on the diagonal, and $\eta \circ \psi^{-1} \circ \Phi \circ \psi \circ \eta^{-1}$ simply deletes one and doesn't change the dinv. Otherwise, first of all notice that $0$'s in the area word of the polyomino correspond to zero valleys on the diagonal (it is clear because they correspond to overlapping horizontal steps). Now, deleting the first vertical red step corresponds to deleting the first vertical step that is not a valley, which is the one immediately above the starting zero valley (hence it lies in the second row; since the red path is not otherwise altered, the sequence of zeroes and positive labels remains the same. Next, deleting the last vertical green step decreases the number of squares in each column of the interested region by one, which means that the zero valleys in the matching region of the Dyck path (except the one in the bottom left corner) should get one step closer to the main diagonal, and since the relative positions of zeroes and positive labels does not change, this maneuver corresponds to deleting the last horizontal step before the next zero valley on the diagonal (if any; otherwise it is the last horizontal step). Finally, the whole region is cycled to the end. This maneuver doesn't change the dinv (the primary becomes secondary and viceversa), except for the contribution of the deleted step: its primary dinv is balanced by the secondary dinv gained by the zero valley that lied in the bottom left corner, but its secondary dinv is lost, and it formed secondary inversions with the zero valleys on the diagonal (except the bottom left one), which are exactly $k-1$. 
	
	This proves that $\psi \circ \eta^{-1}$ maps the dinv to the newdinv. Checking that the compositions match is trivial, as both match the composition of the unbarred letters of the polyomino, where the breakpoints of the parts are the $0$'s.
\end{proof}

\section{The ``$ehh$'' case of the shuffle conjecture}

In this section we prove the combinatorial counterpart of Corollary~\ref{cor:Deltahh_ehh}, which we recall here: for $n,m,k\geq 0$ and $n, m\geq k\geq 0$, we have
\begin{equation}
	\< \Delta_{e_{m+n-k-1}}'e_{m+n}, h_{m}h_n \>  =\< \nabla e_{m+n-k}, e_k h_{n-k} h_{m-k} \>.
\end{equation}

\subsection{The ``$ehh$'' bijection}

Let us call (with a slight abuse of notation) \emph{$(k,n,m)$-shuffle paths} the elements of $\PF(m+n-k)$ whose dinv reading word lies in the shuffle \[ (1, \dots, k) \shuffle (n, \dots, k+1) \shuffle (m+n-k, \dots, n+1). \]

\begin{theorem} \label{thm:ehh_bij}
	There is a bijective correspondence between $\PF^2(m,n)^{\ast k}$ and the set of $(k,n,m)$-shuffle paths that preserves the bistatistic $(\dinv, \area)$.
\end{theorem}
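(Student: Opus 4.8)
The plan is to build the bijection by hand on two car parking functions, using that these carry exactly the same $(\dinv,\area)$-information as the two shuffle parking functions. The crucial structural observation is that, since the labels strictly increase up each column and the only allowed values are $1$ and $2$, every column of a two car parking function has height $0$, $1$ or $2$; a height $2$ column carries a $1$ at the bottom and a $2$ at the top, and its top North step is the unique rise it contains. Thus an element of $\PF^2(m,n)^{\ast k}$ is the datum of a two car parking function together with a choice of $k$ distinct height $2$ columns (the decorated ones).

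I would then define the forward map $\Theta$ as follows. Given $D\in\PF^2(m,n)^{\ast k}$, for each decorated height $2$ column delete its top North step (the decorated rise, bearing label $2$) together with the East step immediately after it — this East exists precisely because the column has height exactly two. Carrying this out at all $k$ decorated columns yields a Dyck path of size $m+n-k$; keep the surviving labels, then relabel: the $k$ surviving bottom $1$'s get $1,2,\dots,k$ in dinv reading word order, the other $n-k$ labels equal to $1$ get $n,n-1,\dots,k+1$ in dinv reading word order (so they form a decreasing run), and the $m-k$ labels equal to $2$ get $m+n-k,\dots,n+1$ in dinv reading word order. I claim $\Theta(D)$ is a $(k,n,m)$-shuffle path. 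The only nontrivial point is that $\Theta(D)$ is a genuine parking function, i.e.\ that labels strictly increase up each column; for this one tracks how columns of $D$ get merged when the East steps are removed (a column gets merged with its right neighbour exactly when it was a decorated column) and uses that a $(k,n,m)$-shuffle path cannot contain two labels from $\{k+1,\dots,n\}$, nor two from $\{n+1,\dots,m+n-k\}$, in one column — a consequence of the two decreasing conditions. One finds that each column of $\Theta(D)$ consists of a (possibly empty) increasing stretch of labels from $\{1,\dots,k\}$, then at most one label from $\{k+1,\dots,n\}$, then at most one from $\{n+1,\dots,m+n-k\}$, hence is strictly increasing, and the global reading word lies in the required shuffle.

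For the inverse, given a $(k,n,m)$-shuffle path $P$ I would process the cells carrying labels $1,\dots,k$ from top to bottom, inserting above each such cell a North step followed immediately by an East step and decorating the inserted rise; then I replace every label in $\{1,\dots,k\}\cup\{k+1,\dots,n\}$ by $1$, every label in $\{n+1,\dots,m+n-k\}$ by $2$, and the inserted steps by $2$. The top-to-bottom order makes the insertion points unambiguous, the newly created columns are exactly the height $2$ decorated columns, and one checks directly that this produces an element of $\PF^2(m,n)^{\ast k}$ and that the two maps are mutually inverse.

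It remains to match the statistics. For the area: deleting the North step of a decorated rise (which, being decorated, contributes nothing to $\area(D)$) together with the East step right after it translates every later row of the path down by one and left by one, and a simultaneous shift by $(-1,-1)$ leaves a row's contribution to the area unchanged; since relabelling is irrelevant to the area, $\area(\Theta(D))=\area(D)$. For the dinv I would argue by induction on $k$, peeling off one decorated rise at a time so as to reduce to $k=1$, and track the per-row contributions $d_i$: one must show that the secondary inversions destroyed at the deleted $2$ are exactly compensated, and that turning the equal labels $1$ into the increasing run $1,\dots,k$ (and the remaining equal labels into decreasing runs) neither creates nor destroys inversions — the last point holding because, for cells with equal area word value, the dinv reading word order agrees with the order that the primary/secondary inversion condition forces on equal labels. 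I expect this dinv bookkeeping to be the main obstacle; the area identity and the bijectivity of $\Theta$ are comparatively routine.
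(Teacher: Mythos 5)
Your map $\Theta$ is exactly the paper's bijection, just run in the opposite direction: the paper starts from a $(k,n,m)$-shuffle path, collapses the labels in $\{k+1,\dots,n\}$ and $\{n+1,\dots,m+n-k\}$ to $\textbf{1}$'s and $\textbf{2}$'s, and then for each $i$ from $k$ down to $1$ inserts a decorated North step and a following East step immediately after the row labelled $i$, replacing $i$ by $\textbf{1}$ and assigning $\textbf{2}$ to the inserted step. Your description of $\Theta$ (delete the decorated rise and the East step after it, then relabel) is precisely the paper's inverse map, and your area argument coincides with the paper's. However, your dinv plan contains a false claim that would sink the proof. You assert that replacing the $k$ surviving bottom $1$'s by the increasing run $1,\dots,k$ in dinv reading word order ``neither creates nor destroys inversions.'' That is not true: if two of those cells land on the same diagonal after the deletions, say at positions $i<j$ with $a_i=a_j$, then the reading word order forces $l_i<l_j$, which \emph{is} a primary inversion, whereas before the relabelling both labels were $1$ and there was none; adjacent diagonals similarly produce new secondary inversions. (A decreasing relabelling of equal labels in reading word order is dinv-neutral; an increasing one is not.) For a concrete example take $(m,n,k)=(3,2,2)$ and the element of $\PF^2(3,2)^{\ast 2}$ with column heights $(2,0,2,1,0)$, row labels $1,2,1,2,2$, and decorations on rows $2$ and $4$: after deleting the two decorated rises together with the East steps following them, the two surviving bottom $1$'s both sit on the main diagonal of the new path, and relabelling them $1,2$ creates a primary inversion. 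The theorem is still true because this created inversion exactly compensates the secondary inversion $(2,3)$ that the deletion destroyed, but that compensation is precisely what must be proved; your plan treats the deletion and the relabelling as each being separately dinv-neutral, which is not how the count balances.

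The paper sidesteps this by performing the insertion and the relabelling of $i$ \emph{simultaneously}, one $i$ at a time from $i=k$ down to $1$, and tracking through a case analysis how every inversion through the row labelled $i$ transforms: primary inversions to the left of $i$ become primary at the new $\textbf{1}$, primary inversions to the right become secondary at the new $\textbf{2}$, secondary inversions to the left split according to whether the other label is $\textbf{1}$ or $\textbf{2}$, and secondary inversions to the right become secondary at the new $\textbf{1}$. Your ``peel one rise at a time'' idea is the right shape, but to complete it you would need a local case analysis of this kind that matches destroyed inversions against created ones, rather than arguing that each step is individually neutral. As you flag yourself, this bookkeeping is the crux, and it is exactly what is missing from the proposal.
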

\begin{remark}
	This result does not provide any new evidence by itself (since the shuffle conjecture has been completely proved), but it suggests that a similar approach could be used to solve the $e_a h_b h_c$ case of the Delta conjecture as well, since it implicitly provides a combinatorial recursion similar to those that are used to prove the $e_a h_b$ and $h_a h_b$ cases (cf. \cite{DAdderio-Iraci-VandenWyngaerd-GenDeltaSchroeder} and \cite{DAdderio-Iraci-polyominoes-2017}). 
\end{remark}
\begin{proof}
	Let $D$ be a $(k,n,m)$-shuffle path. First of all, replace all the numbers from $k+1$ to $n$ with bold $\textbf{1}$'s and all the numbers from $n+1$ to $m+n-k$ with bold $\textbf{2}$'s, with the convention that any bold number is bigger than any regular one. It is immediate to check that this operation does not alter the dinv (since it preserves inversions).
	
	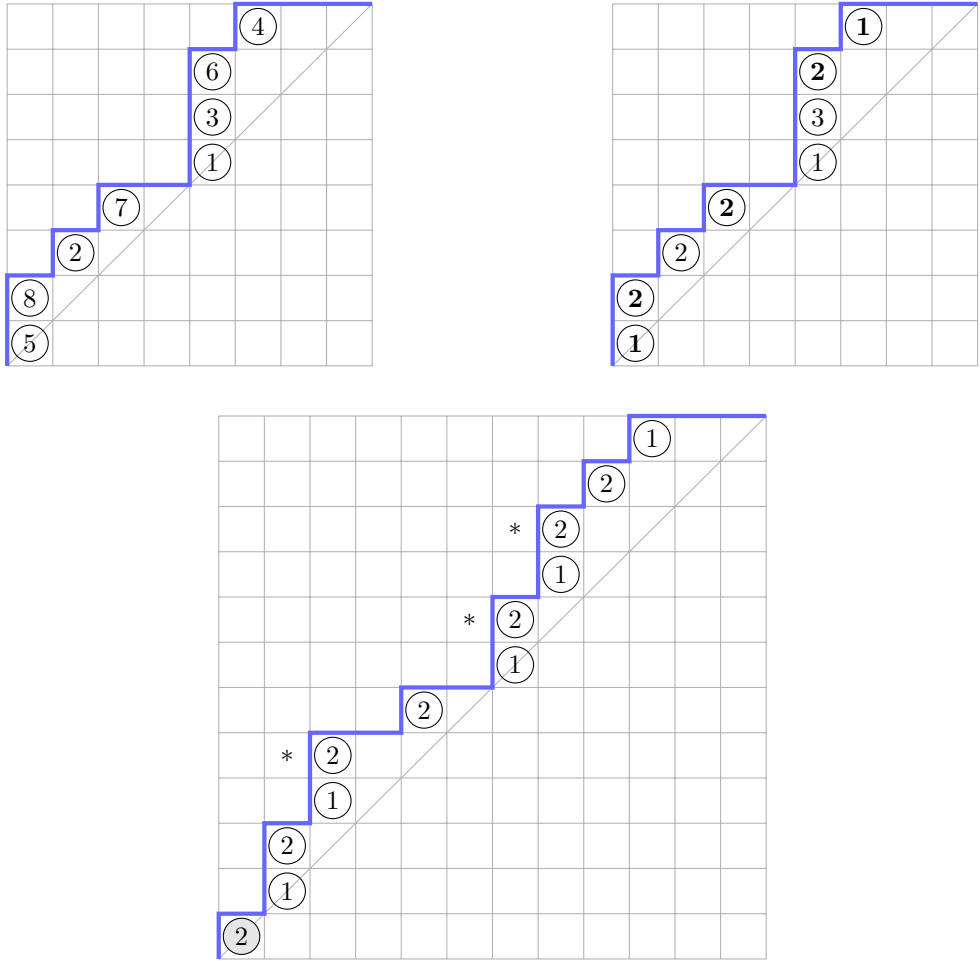
\begin{figure}[!ht]
		\centering
		\begin{minipage}{0.5 \textwidth}
			\centering
			\begin{tikzpicture}[scale = 0.6]
			\draw[gray!60, thin] (0,0) grid (8,8);
			\draw[gray!60, thin] (0,0) -- (8,8);
			
			\draw[blue!60, line width=1.6pt] (0,0) -- (0,1) -- (0,2) -- (1,2) -- (1,3) -- (2,3) -- (2,4) -- (3,4) -- (4,4) -- (4,5) -- (4,6) -- (4,7) -- (5,7) -- (5,8) -- (6,8) -- (7,8) -- (8,8);
			
			\draw
			(0.5,0.5) circle (0.4 cm) node {$5$}
			(0.5,1.5) circle (0.4 cm) node {$8$}
			(1.5,2.5) circle (0.4 cm) node {$2$}
			(2.5,3.5) circle (0.4 cm) node {$7$}
			(4.5,4.5) circle (0.4 cm) node {$1$}
			(4.5,5.5) circle (0.4 cm) node {$3$}
			(4.5,6.5) circle (0.4 cm) node {$6$}
			(5.5,7.5) circle (0.4 cm) node {$4$};
			\end{tikzpicture}
		\end{minipage}%
		\begin{minipage}{0.5 \textwidth}
			\centering
			\begin{tikzpicture}[scale = 0.6]
			\draw[gray!60, thin] (0,0) grid (8,8);
			\draw[gray!60, thin] (0,0) -- (8,8);
			
			\draw[blue!60, line width=1.6pt] (0,0) -- (0,1) -- (0,2) -- (1,2) -- (1,3) -- (2,3) -- (2,4) -- (3,4) -- (4,4) -- (4,5) -- (4,6) -- (4,7) -- (5,7) -- (5,8) -- (6,8) -- (7,8) -- (8,8);
			
			\draw
			(1.5,2.5) circle (0.4 cm) node {$2$}
			(4.5,4.5) circle (0.4 cm) node {$1$}
			(4.5,5.5) circle (0.4 cm) node {$3$};
			
			\draw
			(0.5,0.5) circle (0.4 cm) node {$\textbf{1}$}
			(0.5,1.5) circle (0.4 cm) node {$\textbf{2}$}
			(2.5,3.5) circle (0.4 cm) node {$\textbf{2}$}
			(4.5,6.5) circle (0.4 cm) node {$\textbf{2}$}
			(5.5,7.5) circle (0.4 cm) node {$\textbf{1}$};
			\end{tikzpicture}
		\end{minipage}
		\begin{minipage}{0.8 \textwidth}
			\centering
			\begin{tikzpicture}[scale = 0.6]
			\draw[draw=none, use as bounding box] (-2,-2) rectangle (12,12);
			\draw[gray!60, thin] (-1,-1) grid (11,11);
			\draw[gray!60, thin] (-1,-1) -- (11,11);
			
			\draw[blue!60, line width=1.6pt] (-1,-1) -- (-1,0) -- (0,0) -- (0,1) -- (0,2) -- (1,2) -- (1,3) -- (1,4) -- (2,4) -- (3,4) -- (3,5) -- (4,5) -- (5,5) -- (5,6) -- (5,7) -- (6,7) -- (6,8) -- (6,9) -- (7,9) -- (7,10) -- (8,10) -- (8,11) -- (9,11) -- (10,11) -- (11,11);

			\filldraw[fill=gray!20]
			(-0.5,-0.5) circle (0.4 cm) node {$2$};
			
			\draw
			(0.5,0.5) circle (0.4 cm) node {$1$}
			(0.5,1.5) circle (0.4 cm) node {$2$}
			(1.5,2.5) circle (0.4 cm) node {$1$}
			(1.5,3.5) circle (0.4 cm) node {$2$}
			(3.5,4.5) circle (0.4 cm) node {$2$}
			(5.5,5.5) circle (0.4 cm) node {$1$}
			(5.5,6.5) circle (0.4 cm) node {$2$}
			(6.5,7.5) circle (0.4 cm) node {$1$}
			(6.5,8.5) circle (0.4 cm) node {$2$}
			(7.5,9.5) circle (0.4 cm) node {$2$}
			(8.5,10.5) circle (0.4 cm) node {$1$};
			
			\draw
			(0.5,3.5) node {$\ast$}
			(4.5,6.5) node {$\ast$}
			(5.5,8.5) node {$\ast$};
			\end{tikzpicture}
		\end{minipage}%
		
		\caption{A parking function of size $8$ with dinv reading word $51827364 \in 123 \shuffle 54 \shuffle 876$, an intermediate step of the bijection, and the corresponding two car parking function.}
		\label{fig:2cpf-to-shuffle-ehh}
	\end{figure}
	Now, for any number $1 \leq i \leq k$, perform the following operation. First add a decorated vertical step immediately after the one with the $i$ label assigned, then add a horizontal step immediately after the new vertical step, next replace the label with a bold $\textbf{1}$, and assign a bold $\textbf{2}$ to the new decorated step. If this is done starting from $k$ and then going down to $1$, step by step both statistics are preserved: the area obviously does not change (we are adding a decorated letter to the area word without changing the other ones); the primary dinv on the left of $i$ is now primary dinv on the left of the new bold $\textbf{1}$; the primary dinv on the right of $i$ is now secondary dinv on the left of the new bold $\textbf{2}$; the secondary dinv on the left of $i$ is now either primary dinv on the left of the new bold $\textbf{2}$ (if it came from a bold $\textbf{1}$) or secondary dinv on the left of the new bold $\textbf{1}$ (if it came from a bold $\textbf{2}$); and the secondary dinv on the right of $i$ is now secondary dinv on the right of the new bold $\textbf{1}$. Finally, replace bold numbers with regular ones.
	
	The image is an element in $\PF^2(m,n)^{\ast k}$ with the same dinv and area of $D$. It remains to prove that the map is bijective, but it is since the inverse is easy to compute: given an element in $\PF^2(m,n)^{\ast k}$, replace all the $1$'s that are not immediately before a decorated rise (which are exactly $n-k$) with the numbers $n, \dots, k+1$ according to the order given by the dinv reading word (i.e. start with those on the main diagonal going bottom to top, then move to the diagonal $y=x+1$ going bottom to top, and so on); then do the same for all the $2$'s that are not decorated rises (there are exactly $m-k$ of them) with the numbers $m+n-k, \dots, n+1$; and finally delete the vertical steps containing decorated rises and the following horizontal steps (there must be one, since an element in $\PF^2(m,n)^{\ast k}$ can't have more than two consecutive vertical steps) and replace the labels immediately before the deleted rises with the numbers $1, \dots, k$. The ghost car has to be added at the end.
\end{proof}
\begin{remark}
	Combining Theorem~\ref{thm:ehh_bij}, Corollary~\ref{cor:Deltahh_ehh} and the proof of the ``$hh$'' case of the Delta conjecture in \cite{DAdderio-Iraci-polyominoes-2017}, we get a new, non-compositional proof of the ``$ehh$'' of the shuffle conjecture.
\end{remark}

In fact the recursion used in \cite{DAdderio-Iraci-polyominoes-2017} to prove the ``$hh$'' case of the Delta conjecture has a nice counterpart on the $(k,n,m)$-shuffle paths, which gives a combinatorial argument for the ``$ehh$'' case of the shuffle conjecture independent from the one in \cite{DAdderio-Iraci-polyominoes-2017}. This is the content of the next section.

\subsection{The ``$ehh$'' recursion}

From \cite{DAdderio-Iraci-polyominoes-2017} we have that $\PF^2_{q,t}(m \backslash r, n)^{\ast k}$ (where the parameter $r$ specifies the number of big cars on the main diagonal) satisfies the following recursion

\begin{align*}
	\PF^2_{q,t}(m \backslash r, n)^{\ast k} & = \sum_{s=1}^{n} \sum_{u=1}^{m-r+1} \sum_{h=0}^k t^{m+n-r-s-k+1} \qbinom{r+s-1}{s}_q \\ & \times q^{\binom{h}{2}} \qbinom{s}{h}_q \qbinom{s+u-h-1}{u-h}_q \PF^2_{q,t}(m-r \backslash u, n-s)^{\ast k-h}
\end{align*}

with initial conditions $\PF^2_{q,t}(m \backslash r, 0)^{\ast k} = \delta_{m,r} \delta_{k,0}$.

This recursion has an interpretation in terms of $(k,n,m)$-shuffle paths. Let us call \emph{small cars} the labels from $1$ to $k$, \emph{medium cars} the ones from $k+1$ to $n$, and \emph{big cars} the ones from $n+1$ to $m+n-k$. Notice that the small cars are the only ones in increasing order in the dinv reading word, so in particular there can be inversions involving only small cars. We have that $\PF^2_{q,t}(m \backslash r, n)^{\ast k}$ corresponds to the subset of $(k,n,m)$-shuffle paths of which $r-1$ big cars lie on the main diagonal. In the recursion, $h$ counts the small cars on the main diagonal, $s-h$ counts the medium cars on the main diagonal, $u-h$ counts the big cars at height $1$, and the recursive step is performed by deleting all the medium and big cars on the main diagonal, converting the small cars on the main diagonal to big cars, and then pushing everything else one step towards the diagonal (i.e. in the area word the $0$'s corresponding to medium and big cars are deleted, the $0$'s corresponding to small cars are converted to big cars, and all the other letters keep their car size and decrease by one their value). Next, since we necessarily get a labelled Dyck path with a big car in position $1$, we delete it and get a $(k-h,n-s,m-r)$-shuffle path with $u-1$ big cars on the main diagonal, completing the recursive step.

The proof of \cite{DAdderio-Iraci-polyominoes-2017}*{Theorem~6.1} shows that this same recursion with the same initial conditions is satisfied by $t^{m-k-r+1} \< \Delta_{h_{m-k-r+1}} \Delta_{e_k} e_n \left[ X \dfrac{1 - q^r}{1 - q} \right], e_n \>$. So this proves that the latter polynomial matches the $q,t$-enumerator of $(k,n,m)$-shuffle paths with $r-1$ big cars on the main diagonal. Summing over $r$, and using Corollary~\ref{cor:ehh}, we get a non-compositional proof of the ``$ehh$'' case of the shuffle conjecture, as claimed.

\section*{Acknowledgments}

The authors thank Anna Vanden Wyngaerd for many helpful comments and corrections.


\bibliographystyle{amsalpha}
\bibliography{Biblebib}

\end{document}